\newcommand{\rd}{{\mathbb{R}^d}}
\newcommand{\R}{{\mathbb {R}}}
\newcommand{\tHs}{{\widetilde H^{s}(\Omega)}}
\newcommand{\Th}{{\mathcal {T}_h}}
\newcommand{\Laps}{(-\Delta)^s}
\newcommand{\SZ}{{\Pi_h}}
\newcommand{\as}[2]{\left( #1 , #2 \right)_s}
\numberwithin{equation}{section} \numberwithin{figure}{section}
{\theoremstyle{remark} \newtheorem{remark}{Remark}}
\newtheorem{proposition}{Proposition}[section]
\newtheorem{theorem}{Theorem}[section]
\newtheorem{corollary}{Corollary}[section]
\newtheorem{lemma}{Lemma}[section]
\newtheorem{example}{Example}[section]
\newcommand{\phii}{\varphi}
\newcommand{\pp}{\partial}
\newcommand{\eps}{\varepsilon}
\newcommand{\x}{\texttt{x}}
\newcommand{\Om}{\Omega}
\newcommand{\na}{\nabla}
\newcommand{\tuh}{\widetilde{u}_h}
\newcommand{\supp}{{\mbox{supp}}}
\newcommand{\dist}{{\mbox{dist}}}
\title{Local energy estimates for the fractional Laplacian}
\author[J.P.~Borthagaray]{Juan Pablo~Borthagaray}
\address[J.P.~Borthagaray]{Departamento de Matem\'atica y
Estad\'istica del Litoral, Universidad de la Rep\'ublica, Salto,
Uruguay}
\email{jpborthagaray@unorte.edu.uy}
\thanks{JPB has been supported in part by NSF grant DMS-1411808 and an AMS-Simons Travel Grant.}
\author[D.~Leykekhman]{Dmitriy~Leykekhman}
\address[D.~Leykekhman]{DUniversity of Connecticut, Storrs, CT 06269, USA}
\email{dmitriy.leykekhman@uconn.edu}
\author[R.H.~Nochetto]{Ricardo H.~Nochetto}
\address[R.H.~Nochetto]{Department of Mathematics and Institute for
Physical Science and Technology, University of Maryland, College
Park, MD 20742, USA}
\email{rhn@math.umd.edu}
\thanks{RHN has been supported in part by NSF grants DMS-1411808 and DMS-1908267.}
\begin{document}

\maketitle

\begin{abstract}
The integral fractional Laplacian of order $s \in (0,1)$ is a nonlocal operator.
It is known that solutions to the Dirichlet problem involving such an operator exhibit an algebraic boundary singularity regardless of the domain regularity. This, in turn, deteriorates the global regularity of solutions and as a result the global convergence rate of the numerical solutions. For finite element discretizations, we derive {\em local} error estimates in the $H^s$-seminorm and show optimal convergence rates in the interior of the domain by only assuming meshes to be shape-regular. These estimates quantify the fact that the reduced approximation error is concentrated near the boundary of the domain. We illustrate our theoretical results with several numerical examples.
\end{abstract}

\section{Introduction}\label{sec:introduction}

In this work we consider finite element discretizations of the problem
\begin{equation} \label{eq:Dirichlet}
\left\lbrace \begin{array}{rl}
\Laps u = f & \mbox{in } \Omega, \\
u = 0 & \mbox{in } \Omega^c := \R^d \setminus \Omega,
\end{array}\right.
\end{equation}
where $\Omega \subset \rd$ is a bounded domain and $\Laps$ is the integral fractional Laplacian of order $s \in (0,1)$,
\begin{equation} \label{eq:def_of_laps}
\Laps u (x) := C_{d,s} \mbox{ p.v. } \int_\rd \frac{u(x) - u(y)}{|x-y|^{d+2s}} \, dy . 
\end{equation}
The normalization constant $C_{d,s} = \frac{2^{2s}s\Gamma\left(s+\frac{d}{2}\right)}{\pi^{d/2}\Gamma(1-s)} $ makes the integral  in \eqref{eq:def_of_laps}, calculated in the principal value sense, coincide with the Fourier definition of $\Laps u$.  It is well understood that, even if the data is smooth (for example, if $\pp\Omega \in C^\infty$ and $f \in C^\infty(\overline\Omega)$), then the unique solution to \eqref{eq:Dirichlet} develops an algebraic singularity near $\partial\Omega$, i.e. a singularity of the form $\operatorname{dist}(x,\partial\Omega)^s$ (cf. Example \ref{ex:nonsmooth}). This is in stark contrast with the classical Laplacian equation.

Nevertheless, in such a case one expects the solution to be locally smooth in $\Omega$, and thus the discretization error to be smaller in the interior of the domain. Our main result (Theorem \ref{thm:local_energy}) is a quantitative estimate of the fact that the finite element error is concentrated around $\pp\Om$.

The fractional Laplacian \eqref{eq:def_of_laps} is a nonlocal operator: computing $\Laps u (x)$  requires the values of $u$ at points arbitrarily far away from $x$. Nonlocality is also reflected in the variational formulation of \eqref{eq:Dirichlet}: the natural space in which the problem is set is the zero-extension fractional Sobolev space $\tHs$, and the norm therein is not subadditive with respect to domain partitions. Furthermore, it is not possible to localize the inner product in $\tHs$, because functions with supports arbitrarily far away from each other may have nonzero $H^s$-inner product. This is also in stark contrast with the local case (i.e., with the inner product in $H^1(\Omega)$), and makes the development of {\em local} estimates for such a {\em nonlocal} problem a more delicate matter, especially in the case of general shape--regular meshes. This is the main purpose of this paper.

In recent years, there has been significant progress in the numerical analysis and implementation of \eqref{eq:Dirichlet} and related fractional-order problems. Finite element discretizations provide naturally the best approximation in the energy norm. A priori convergence rates in the energy norm for approximations using piecewise linear basis functions on either quasi-uniform or graded meshes were derived in \cite{AcosBort2017fractional}; similar results, but regarding convergence in $H^1(\Omega)$ in case $s>\frac{1}{2}$, were obtained in \cite{BoCi19}. The use of adaptive schemes and a posteriori error estimators has been studied in \cite{ainsworth2017aspects, faustmann2019quasi, gimperlein2019space, nochetto2010posteriori, zhao2017adaptive}. A non-conforming discretization, based on a Dunford-Taylor representation was proposed and analyzed in \cite{BoLePa17}. We refer to \cite{BBNOS18,BoLiNo19} for further discussion on these methods. In contrast, the analysis of finite difference schemes typically leads to error estimates in the $L^\infty(\Omega)$-norm under regularity assumptions that cannot be guaranteed in general \cite{duo2018novel, duo2019accurate, huang2014numerical}. 

We learned about \cite{faustmann2020local} after our paper was submitted. {Ref.} \cite{faustmann2020local} also performs a local error analysis for the problem \eqref{eq:Dirichlet}. The local estimates in \cite{faustmann2020local} differ from ours in several respects. The main differences lie in the form of the pollution term, which is expressed in the $H^{s-\frac{1}{2}}$-norm instead of the $L^2$-norm, and that the error estimates are measured in the $H^1$-norm besides the $H^s$-energy norm. The analytical techniques differ as well. While the proof in \cite{faustmann2020local}  is based on the use of the Caffarelli-Silvestre extension, our approach is purely nonlocal and is based on Caccioppoli estimates that are valid for a more general class of kernels \cite{CozziM_2017} and meshes.

The rest of the paper is organized as follows. In Section  \ref{sec:variational_form}, we review the fractional-order spaces and the regularity of solutions to \eqref{eq:Dirichlet} in either standard or weighted Sobolev spaces. In Section \ref {sec:FE}, we describe our finite element discretization, review basic energy based error estimates, and combine such estimates with Aubin-Nitsche techniques to derive novel convergence rates in $L^2$-norm. In Section \ref{sec: Caccioppoli}, we provide a proof of Caccioppoli estimate for the continuous problem. In Section \ref{sec:local_estimates}, which is the central part of the paper, we combine Caccioppoli estimates and  superapproximation techniques, to obtain interior error estimates with respect to $H^s$-seminorms. At the end of this section we show some applications of our interior error estimates. In particular, we discuss the convergence rates of the finite element error in the interior of the domain  with respect to  smoothness of the domain and the right hand side in the case of quasi-uniform and graded meshes. The results are summarized in Tables 1 and 2. Finally,  several numerical examples at the end of the paper illustrate the theoretical results from Section \ref{sec:local_estimates}.

\section{Variational formulation and regularity}\label{sec:variational_form}

In this section, we briefly discuss important features of fractional-order Sobolev spaces that are instrumental for our analysis. Furthermore, we consider regularity properties of the solution to \eqref{eq:Dirichlet} and review some negative results that lead to the use of certain weighted spaces, in which the weight compensates the singular behavior of the gradient of the solution near the boundary of the domain. Having regularity estimates in such weighted spaces at hand, we shall be able to increase the convergence rates by constructing {\em a priori} graded meshes.

\subsection{Sobolev spaces}

Sobolev spaces of order $s \in (0,1)$ provide the natural setting for the variational formulation of \eqref{eq:Dirichlet}. More precisely, we consider $H^s(\rd)$ to be the set of $L^2$-functions $v : \rd \to \R$ such that
\begin{equation} \label{eq:def_Hs_seminorm}
|v|_{H^s(\rd)} :=  \left( \frac{C_{d,s}}2 \int_\rd \int_\rd \frac{|v(x)-v(y)|^2}{|x-y|^{d+2s}} \; dy \; dx \right)^{1/2} < \infty,
\end{equation}
where $C_{d,s}$ is taken as in \eqref{eq:def_of_laps}. Clearly, these are Hilbert spaces; we shall denote by $\as{\cdot}{\cdot}$ the bilinear form that gives rise to the fractional-order seminorms, namely,
\begin{equation} \label{eq:defofinnerprod}
  \as{v}{w} := \frac{C_{d,s}}2 \int_\rd \int_\rd \frac{(v(x)-v(y))(w(x) - w(y))}{|x-y|^{d+2s}} \; dy \; dx .
\end{equation}
For the variational formulation of \eqref{eq:Dirichlet}, we need the zero-extension spaces
\[
\tHs := \{ v \in H^s(\rd) \colon \supp (v) \subset \overline{\Omega} \}, 
\]
for which the form $\as{\cdot}{\cdot}$ becomes an inner product.
Moreover, if $v,w \in \tHs$, then integration in \eqref{eq:defofinnerprod} takes place in $(\rd\times\rd) \setminus(\Omega^c\times\Omega^c)$. We shall denote the $\tHs$-norm by $\|v\|_{\tHs} := \as{v}{v}^{1/2} = |v|_{H^s(\rd)}$, and remark that the $L^2$-norm of $v$ is not needed because a Poincar\'e inequality holds in the zero-extension Sobolev spaces.

Fractional-order Sobolev spaces can be equivalently defined through interpolation of integer-order spaces; remarkably, if one suitably normalizes the standard $K$-functional, then the norm equivalence constants can be taken to be independent of $s$ \cite[Lemma 3.15 and Theorem B.9]{mclean2000strongly}. Although the constant $C_{d,s}$ in \eqref{eq:def_Hs_seminorm} is fundamental in terms of continuity of Sobolev seminorms as $s\to 0 ,1$, we shall omit it whenever $s$ is fixed. For simplicity of notation, throughout this paper we shall adopt the convention $H^0(\Omega) = L^2(\Omega)$.

Let $H^{-s}(\Omega)$ denote the dual space to $\tHs$, and $\langle \cdot , \cdot \rangle$ be their duality pairing. Because of \eqref{eq:defofinnerprod} it follows that if $v \in \tHs$ then $(-\Delta)^s v \in H^{-s}(\Omega)$ and
\begin{equation*}
\label{eq:innerprodisduality}
  \as{v}{w} = \langle (-\Delta)^s v, w \rangle, \quad \forall w \in \tHs.
\end{equation*}
This integration by parts formula motivates the following weak formulation of \eqref{eq:Dirichlet}: given $f \in H^{-s}(\Omega)$, find $u \in \tHs$ such that
\begin{equation} \label{eq:weak_linear}
\as{u}{v} = \langle f, v \rangle \quad \forall v \in \tHs.
\end{equation}
Because this formulation can be cast in the setting of the Lax-Milgram Theorem,
existence and uniqueness of weak solutions, and stability of the solution map $f \mapsto u$, are straightforward.

\subsection{Sobolev regularity} \label{sec:regularity}
Well-posedness of \eqref{eq:weak_linear} in $\tHs$ if $f\in H^{-s}(\Omega)$ is a consequence of the Lax-Milgram Theorem. A subsequent question is what additional regularity does $u$ inherit for smoother $f$. For the sake of finite element analysis, here we shall focus on Sobolev regularity estimates.

By now it is well understood that for smooth domains $\Omega$ and data $f$, solutions to \eqref{eq:Dirichlet} develop an algebraic singular layer of the form (cf. for example \cite{Grubb,RosOtonSerra})
\begin{equation} \label{eq:bdry_behavior}
u(x) \, \dist(x,\pp\Omega)^{-s} = v(x),
\end{equation}
where $v$ is H\"older continuous up to $\partial\Omega$; this limits the global smoothness of solutions. Indeed, if $u$ is locally smooth in $\Omega$ but behaves as \eqref{eq:bdry_behavior}, then one cannot guarantee that $u$ belongs to $H^{s+\frac{1}{2}}(\Omega)$; actually, in general $ u \notin H^{s+\frac{1}{2}}(\Omega)$ (see Example \ref{ex:nonsmooth}).

We now quote a recent result \cite{BoNo19}, that characterizes regularity of solutions in terms of Besov norms. Its proof follows a technique introduced by Savar\'e \cite{Savare98}, that consists in combining the classical Nirenberg difference quotient method with suitably localized translations and exploiting certain convexity properties. We refer to \cite[Section 4]{Savare98} for a definition and basic properties of Besov spaces.

\begin{theorem}[Besov regularity on Lipschitz domains] \label{T:Besov_regularity}
Let $\Omega$ be a bounded Lipschitz domain, $s \in (0,1)$ and $f \in L^2(\Omega)$. Then, there exist constants $C,\zeta$ depending on $\Omega, d$ such that the solution $u$ to \eqref{eq:Dirichlet} belongs to the Besov space $B^{s+\theta}_{2,\infty}(\Omega)$, where $\theta = \frac{1}{2}$ for $\frac{1}{2}<s<1$ and $\theta = s - \epsilon > 0$ for $0 < s \le \frac{1}{2}$, and satisfies the estimates
\begin{equation} \label{eq:Besov_regularity}  
  \| u \|_{B^{s+\theta}_{2,\infty}(\Omega)} \le
  \begin{cases}
    C \big(\frac{1}{2s-1}\big)^{\zeta} \|f\|_{L^2(\Omega)} & \quad  \frac12 < s < 1,
    \\
    C \big(\frac{s}{\eps}\big)^{\zeta} \|f\|_{L^2(\Omega)} & \quad  0 < s \le \frac12.
  \end{cases}
\end{equation}
\end{theorem}
Combining \eqref{eq:Besov_regularity} with the Sobolev embedding $\| u \|_{H^{s+\theta-\eps}(\Omega)} \le \frac{C}{\sqrt\eps} \| u \|_{B^{s+\theta}_{2,\infty}(\Omega)}$ yields
\begin{equation} \label{eq:regularity}
\| u \|_{H^{s+\theta-\eps}(\Omega)} \le\frac{C}{\eps^{\xi}} \|f\|_{L^2(\Omega)} \quad \forall \, 0<\eps < s,
\end{equation}
where $\xi = 1/2$ for $\frac12 < s < 1$ and $\xi=1/2+\zeta$ for $0 < s \le \frac12$ and $C=C(\Omega,d,s)$.

There are two conclusions to be drawn from the previous result. In first place, assuming the domain to be Lipschitz is optimal, in the sense that if $\Omega$ was a $C^\infty$ domain then no further regularity could be inferred. Thus, reentrant corners play no role on the global regularity of solutions: the boundary behavior \eqref{eq:bdry_behavior} dominates any point singularities that could originate from them; we refer to \cite{Gimperlein:19} for further discussion on this point.
In second place, in general the smoothness of the right hand side cannot make solutions any smoother than $\cap_{\eps > 0} \widetilde{H}^{s + \frac{1}{2} - \eps}(\Omega)$. The expression \eqref{eq:bdry_behavior} holds in spite of the smoothness of $f$ near $\pp \Omega$.
We illustrate these two points with a well-known example \cite{Getoor}.

\begin{example}[limited regularity]\label{ex:nonsmooth}
Let $\Omega = B(0,1) \subset \rd$ and $f \equiv 1$. Then, the solution to \eqref{eq:Dirichlet} is
\begin{equation} \label{eq:getoor}
u(x) = \frac{\Gamma(\frac{d}{2})}{2^{2s} \Gamma(\frac{d+2s}{2})\Gamma(1+s)} ( 1- |x|^2)^s_+,
\end{equation}
where $t_+ =\max\{t,0\}$. Therefore, $u \in \cap_{\eps > 0} \widetilde{H}^{s + \frac{1}{2} - \eps}(\Omega)$.
\end{example}

We also point out a limitation in the technique of proof in Theorem \ref{T:Besov_regularity} from \cite{BoNo19} that is related to the example above. Namely, in case $s < \frac{1}{2}$ and $f \in H^r(\Om)$ for some $r > 0$, solutions are expected to be smoother than just $H^{2s}(\Omega)$; however, one cannot derive such higher regularity estimates from Theorem \ref{T:Besov_regularity}. For smooth domains (i.e., $\pp\Om \in C^\infty$), the following estimate holds \cite{VishikEskin}:
\begin{equation} \label{eq:vishik}
f \in H^r(\Omega), \ -s \le r < \frac{1}{2} - s \quad \Rightarrow \quad u \in \widetilde{H}^{2s+r}(\Omega).
\end{equation}

\subsection{Regularity in weighted Sobolev spaces}
By developing a fractional analog of the Krylov boundary Harnack method, Ros-Oton and Serra \cite{RosOtonSerra} obtained a fine characterization of boundary behavior of solutions to \eqref{eq:Dirichlet} and derived H\"older regularity estimates. In order to exploit these estimates and apply them in a finite element analysis, reference \cite{AcosBort2017fractional} introduced certain weighted Sobolev spaces, where the weight is a power of the distance to $\pp\Omega$.
Let
\[
\delta(x) := \dist(x, \pp \Omega), \quad \delta(x,y) := \min \{ \delta(x), \delta(y) \}.
\]
Then, for $k \in \mathbb{N}\cup\{0\}$ and $\gamma \ge 0$, we consider the norm
\begin{equation}
\label{eq:defofWnorm}
  \| v \|_{H^k_\gamma(\Omega)}^2 = \int_\Omega \left( |v(x)|^2 + \sum_{|\beta| \leq k} |\partial^\beta v(x)|^2 \right) \delta(x)^{2\gamma} d x
\end{equation}
and define $H^k_\gamma(\Omega)$ and $\widetilde H^k_\gamma(\Omega)$ as the closures of $C^\infty(\Omega)$ and $C_0^\infty(\Omega)$, respectively, with respect to the norm \eqref{eq:defofWnorm}.

Next, for $t = k + s$, with $k \in \mathbb{N}\cup\{0\}$ and $s \in (0,1)$, and $\gamma \ge 0$, we consider
\begin{equation*} \begin{split}
& \| v \|_{H^{t}_\gamma (\Omega)}^2 := \| v \|_{H^k_\gamma (\Omega)}^2 + 
| v |_{H^{t}_\gamma (\Omega)}^2, \\
& | v |_{H^{t}_\gamma (\Omega)}^2 := 
\int_\Omega \int_\Omega \frac{|\nabla^k v(x)-\nabla^k v(y)|^2}{|x-y|^{d+2s}} \, \delta(x,y)^{2\gamma} \, dy \, dx
\end{split} \end{equation*}
and the associated space $H^t_\gamma (\Omega) :=  \left\{ v \in H^k_\gamma(\Omega) \colon \| v \|_{H^t_\gamma (\Omega)} < \infty \right\} .$

In analogy with the notation for their unweighted counterparts, we define zero-extension weighted Sobolev spaces by
\begin{equation} \label{eq:weighted_sobolev}
\widetilde{H}^{t}_\gamma (\Omega) := \{ v \in H^{t}_\gamma (\rd) : \ v = 0 \mbox{ a.e. in } \Omega^c \}
\end{equation}
with $\| v \|^2_{\widetilde H^{t}_\gamma (\Omega)} := \| v \|^2_{\widetilde H^{k}_\gamma (\Omega)} + | v |^2_{H^{t}_\gamma (\rd)}$. The convenience of using the same weight in both the function and its fractional-order derivatives is discussed in \cite[Section 3]{BoNoSa18}.

We have the following regularity estimate in the scale \eqref{eq:weighted_sobolev} \cite[Proposition 3.12]{AcosBort2017fractional}, \cite[Formula (3.6)]{BBNOS18}.

\begin{theorem}[weighted Sobolev estimate] \label{T:weighted_regularity}
Let $\Omega$ be a bounded, Lipschitz domain satisfying the exterior ball condition, (i.e., there exists $r > 0$ such that for all $x\in \partial \Omega$, there exists $B(y,r)\subset \Omega^c$ satisfying $\overline{B}(y,r)\cap \overline{\Omega}=\{x\}$), $s\in(0,1)$, $f \in C^{\beta}(\overline\Omega)$ for some $\beta \in (0,2-2s)$, $\gamma \ge 0$, $t < \min \{ \beta + 2s, \gamma + s + \frac{1}{2} \}$ and $u$ be the solution of \eqref{eq:weak_linear}. Then, it holds that $u \in \widetilde H^{t}_{\gamma}(\Omega)$ and
\[
\|u\|_{\widetilde H^{t}_{\gamma}(\Omega)} \le \frac{C(\Omega,d,s)}{\sqrt{(\beta + 2s - t) \, (1+2(\gamma + s - t))}} \| f \|_{C^{\beta}(\overline\Omega)}.
\]
\end{theorem}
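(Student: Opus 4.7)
My plan is to combine pointwise control on derivatives of $u$ in powers of the distance function $\delta(x) = \dist(x,\pp\Omega)$ with direct integration against the weight $\delta^{2\gamma}$, tracking where the thresholds $t < \beta+2s$ and $t < \gamma+s+1/2$ force the relevant integrals to degenerate.

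The first step is to invoke the Krylov-type H\"older regularity theory of Ros-Oton and Serra for \eqref{eq:Dirichlet} on domains satisfying the exterior ball condition. When $f \in C^{\beta}(\overline{\Omega})$ with $\beta + 2s \notin \mathbb{N}$, this theory supplies pointwise estimates of the form
\[
|D^{m} u(x)| \le C \|f\|_{C^{\beta}(\overline\Omega)} \, \delta(x)^{\min\{\beta + 2s,\, m\} - m},
\]
which faithfully reflect the boundary layer $u \approx \delta^{s}$: for $m \le \beta + 2s$ the bound is harmless, whereas for $m > \beta + 2s$ one pays the algebraic penalty $\delta^{\beta + 2s - m}$. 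These are the estimates I would carry into the weighted norm.

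Next, I would plug the pointwise bounds into the definition \eqref{eq:defofWnorm}. For the integer-order contribution, the problem reduces to integrals $\int_{\Omega} \delta(x)^{2\alpha}\,dx$ with $\alpha = \gamma + \min\{\beta+2s,k\} - k$; since $\Omega$ is Lipschitz, a boundary-layer decomposition makes these comparable to $\int_{0}^{c} r^{2\alpha}\,dr$, which converges precisely when $2\alpha > -1$, i.e. $k < \gamma + s + 1/2$, and yields the factor $(1+2(\gamma+s-k))^{-1/2}$. For the fractional piece with $t = k + s_{0}$, $s_{0} \in (0,1)$, I would split the double integral defining $|u|_{H^{t}_{\gamma}(\Omega)}$ into the near region $\{|x-y| \le \delta(x,y)/2\}$ and its complement. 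In the near region, the mean value theorem applied to $\nabla^{k} u$ combined with the pointwise bound on $\nabla^{k+1} u$ reduces the integrand to an expression of the form $|x-y|^{2-d-2s_{0}}\,\delta(x,y)^{2(\sigma - k - 1)+2\gamma}$; an inner radial integration in $|x-y|$ produces the $(\beta + 2s - t)^{-1/2}$ factor, and a subsequent outer integration over $\Omega$ against the weight produces the $(1+2(\gamma+s-t))^{-1/2}$ factor. In the far region, the crude bound $|\nabla^{k}u(x) - \nabla^{k}u(y)| \le |\nabla^{k}u(x)| + |\nabla^{k}u(y)|$ together with the same pointwise estimates gives an integrable tail under exactly the same thresholds.

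The main obstacle I expect is the simultaneous sharpness of the two constants appearing in the denominator. One has to split the double integral at a scale comparable to $\delta(x,y)$ and arrange the radial integrations so that the H\"older exponent $\beta + 2s - t$ (governing the mean-value increment) and the integrability exponent $1 + 2(\gamma + s - t)$ (governing the weighted volume integral) each produce exactly one factor of the form $\int_{0}^{\cdot} r^{2\eta - 1}\,dr = r^{2\eta}/(2\eta)$ near its own critical value $\eta = 0$. Only by keeping the contributions decoupled throughout the two-scale splitting does one recover the announced blow-up $\bigl((\beta + 2s - t)(1+2(\gamma+s-t))\bigr)^{-1/2}$ as $t$ approaches either threshold.
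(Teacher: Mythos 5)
The paper does not prove this theorem itself: it quotes it from \cite[Proposition 3.12]{AcosBort2017fractional} and \cite[Formula (3.6)]{BBNOS18}, so there is no in-text argument to compare against. Your high-level plan does match the strategy used in \cite{AcosBort2017fractional}: import pointwise derivative bounds from the Krylov-type boundary-regularity theory of Ros-Oton and Serra, feed them into the weighted integrals defining \eqref{eq:defofWnorm} and the fractional seminorm, split the double integral at the scale $\delta(x,y)$, and read off the two constraints $t<\beta+2s$, $t<\gamma+s+1/2$ from the radial and weighted-volume integrations respectively. That part of the design is sound.

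However, the pointwise estimate you start from is not correct, and the error propagates through the whole calculation. You claim $|D^{m}u(x)|\le C\|f\|_{C^{\beta}(\overline\Omega)}\,\delta(x)^{\min\{\beta+2s,\,m\}-m}$ and describe the regime $m\le\beta+2s$ as ``harmless,'' i.e.\ $|D^{m}u|\le C$. This is inconsistent with the boundary layer $u\approx\delta^{s}$ you invoke in the same sentence: since $s<1$, already $|Du|\approx\delta^{s-1}$ blows up near $\partial\Omega$, so $|Du|\le C$ fails whenever $\beta+2s\ge 1$, which is within the admissible range $\beta\in(0,2-2s)$. The correct bound from the Ros-Oton--Serra theory, as used in \cite{AcosBort2017fractional}, is $|D^{\ell}u(x)|\le C\|f\|_{C^{\beta}(\overline\Omega)}\,\delta(x)^{s-\ell}$ for integer $\ell\ge 1$ (together with a matching increment estimate of the form $|D^{k}u(x)-D^{k}u(y)|\lesssim |x-y|^{\alpha}\,\delta(x,y)^{s-k-\alpha}$ for admissible $\alpha$). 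The exponent $s-\ell$, not $\min\{\beta+2s,\ell\}-\ell$, is what makes $\int_{\Omega}|D^{k}u|^{2}\delta^{2\gamma}$ finite precisely when $k<\gamma+s+\tfrac12$ and yields the factor $(1+2(\gamma+s-t))^{-1/2}$. With your exponent, the weighted volume integral would converge for a different (too permissive) range of $\gamma$ and would not produce the stated constant. Similarly, the $(\beta+2s-t)^{-1/2}$ factor cannot come from a mean-value step with a bound on $\nabla^{k+1}u$ as you propose when $\beta+2s-k<1$; in that regime $\nabla^{k+1}u$ is not available and one must use the genuine $C^{\beta+2s-k}$-H\"older increment on $\nabla^{k}u$ (weighted by the appropriate negative power of $\delta$), whose radial integration over $|x-y|\lesssim\delta(x,y)$ is what degenerates as $t\uparrow\beta+2s$. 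So the skeleton of the argument is right, but the central pointwise input must be replaced by the $\delta^{s-\ell}$-scaled bounds before the two critical exponents and the blow-up constants come out as in the statement.
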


\begin{remark}[optimal parameters] \label{rmk:weights}
In finite element applications of Theorem \ref{T:weighted_regularity}, discussed in Section \ref{sec:FE}, we will design graded meshes with a grading dictated by $\gamma$. The optimal choice of parameters $t$ and $\gamma$ depends on both the smoothness of the right hand side $f \in C^{\beta}(\overline\Omega)$ and the dimension $d$ of the space. We illustrate this now: let $d\ge 2$, $s < \frac{d}{2(d-1)}$, $\beta = \frac{d}{2(d-1)} - s$, and $\eps > 0$ be sufficiently small, and choose $t = s + \frac{d}{2(d-1)} - \eps d$ and $\gamma = \frac{1}{2(d-1)} - \eps$, to obtain the optimal regularity estimate
\[
\|u\|_{\widetilde H^{t}_{\gamma}(\Omega)} \le \frac{C(\Omega,d,s)}{\eps} \| f \|_{C^{\beta}(\overline\Omega)}.
\]
In contrast, if $s \ge \frac{d}{2(d-1)}$, we set $\beta$ to be any positive number and take $t, \gamma$ as above to arrive at
\[
\|u\|_{\widetilde H^{t}_{\gamma}(\Omega)} \le \frac{C(\Omega,d,s,\beta)}{\sqrt{\eps}} \| f \|_{C^{\beta}(\overline\Omega)}.
\]
\end{remark}

\begin{remark}[exterior ball condition]
Taking into account the results from \cite{Gimperlein:19}, the exterior ball condition could be relaxed. Indeed, such a reference proves that the asymptotic expansion \eqref{eq:bdry_behavior} is valid also for corner singularities, which implies that graded meshes also give rise to optimal convergence rates in that situation. Nevertheless, because the analysis of effects of reentrant corners is beyond the scope of this paper, we leave the exterior ball assumption on $\Omega$.
\end{remark}
  
\section{Finite Element Discretization} \label{sec:FE}
We next consider finite element discretizations of \eqref{eq:weak_linear} by using piecewise linear continuous functions. Let $h_0 > 0$; for $h \in (0, h_0]$, we let $\mathcal{T}_h$ denote a triangulation of $\Om$, i.e., $\mathcal{T}_h = \{T\}$ is a partition of $\Om$ into simplices $T$ of diameter $h_T$. 
We assume the family $\{\Th \}_{h>0}$ to be shape-regular, namely,
\[
\sigma := \sup_{h>0} \max_{T \in \Th} \frac{h_T}{\rho_T} <\infty,
\]
where $h_T = \mbox{diam}(T)$ and $\rho_T $ is the diameter of the largest ball contained in $T$. As usual, the subindex $h$ denotes the element size, $h = \max_{T \in \Th} h_T$; moreover, we take elements to be closed sets.

We shall also need a smooth mesh function $h(x)$, which is locally comparable with the element size. Note that shape-regularity yields $|\na h|\le C(\sigma)$ (cf. \cite[~Lemma~5.1]{NochettoRH_PaoliniM_VerdiC_1991}), and thus
\begin{equation}\label{eq: mesh function}
|h(x)-h(y)|\le C(\sigma) |x-y|, \quad \forall x,y\in \Omega.
\end{equation}

Let $\mathcal{N}_h$ be the set of interior vertices of $\Th$, $N$ be its cardinality , and $\{ \varphi_i \}_{i=1}^N$ the standard piecewise linear Lagrangian basis, with $\phii_i$ associated to the node $\x_i \in \mathcal{N}_h$. With this notation, the set of discrete functions is
\begin{equation*} \label{eq:FE_space}
\mathbb{V}_h :=  \left\{ v \in C_0(\Omega) \colon v = \sum_{i=1}^N v_i \varphi_i \right\}.
\end{equation*}
It is clear that $\mathbb{V}_h \subset \tHs$ for all $s \in (0,1)$ and therefore we have a conforming discretization.

\subsection{Interpolation and inverse estimates}
Fractional-order seminorms are not subadditive with respect to domain decompositions; therefore, some caution must be exercised when localizing them.
With the goal of deriving interpolation estimates, we define the star (or patch) of a set  $A \in \Omega$ by
\[
  S_A := \bigcup \left\{ T \in \Th \colon T \cap A \neq \emptyset \right\}.
\]
Given $T \in \Th$, the star $S_T$ of $T$ is the first ring of $T$ and the star $S_{S_T}$ of $S_T$ is the second ring of $T$.  The star of the node $\x_i \in \mathcal{N}_h$ is $S_i := \mbox{supp}(\varphi_i)$.

We have the following localization estimate for all $v \in H^s(\Omega)$ \cite{Faermann2, Faermann}
\begin{equation} \label{eq:localization}
|v|_{H^s(\Omega)}^2 \leq \sum_{T \in \Th} \left[ \int_T \int_{S_T} \frac{|v (x) - v (y)|^2}{|x-y |^{d+2s}} \; dy \; dx + \frac{C(d,\sigma)}{s h_T^{2s}} \| v \|^2_{L^2(T)} \right]. 
\end{equation}
This inequality shows that to estimate fractional seminorms over $\Omega$, it suffices to compute integrals over the set of patches $\{T \times S_T \}_{T \in \Th}$ plus local zero-order contributions. In addition, if these $L^2$ contributions have vanishing means over elements --as is often the case whenever $v$ is an interpolation error-- a Poincar\'e inequality allows one to estimate them in terms of local $H^s$-seminorms. Thus, one can prove the following local quasi-interpolation estimates (see, for example, \cite{AcosBort2017fractional,BoNoSa18,CiarletJr}).

\begin{proposition}[local interpolation estimates] \label{prop:app_SZ}
Let $T \in \Th$, $s \in (0,1)$, $t \in (s, 2]$, and $\SZ$ be a suitable quasi-interpolation operator. If $v \in H^t (S_{S_T})$, then
\begin{equation} \label{eq:interpolation}
 \int_T \int_{S_T} \frac{|(v-\SZ v) (x) - (v-\SZ v) (y)|^2}{|x-y|^{d+2s}} \, d y \, d x \le C \, h_T^{2(t-s)} |v|_{H^t(S_{S_T})}^2,
\end{equation}
where $C = C(\Omega,d,s,\sigma, t)$.
Moreover, considering the weighted Sobolev scale \eqref{eq:weighted_sobolev}, it holds that for all $v \in H^t_\gamma (S_{S_T})$,
\begin{equation} \label{eq:weighted_interpolation}
 \int_T \int_{S_T} \frac{|(v-\SZ v) (x) - (v-\SZ v) (y)|^2}{|x-y|^{d+2s}} \, d y \, d x \le C
   h_T^{2(t-s-\gamma)} |v|_{H^t_\gamma(S_{S_T})}^2.
\end{equation}
\end{proposition}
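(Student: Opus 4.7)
The plan is to use a Scott--Zhang style quasi-interpolation operator $\SZ$ with the usual local properties: $L^2$-stability $\|\SZ v\|_{L^2(T)} \le C \|v\|_{L^2(S_T)}$, preservation of affine functions on each patch $S_T$, and a local approximation bound derivable from these two via Bramble--Hilbert. Writing $w := v - \SZ v$ and using that $T \subset S_T$, the double integral on the left of \eqref{eq:interpolation} is bounded by the full patch seminorm $|w|_{H^s(S_T)}^2$, so it suffices to control this quantity.

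For the unweighted estimate I would then carry out the standard scaling argument. Map $S_T$ to a reference patch $\widehat S_T$ of unit size by the affine transformation $F_T(\hat x) = h_T \hat x + x_T$; shape-regularity ensures that, up to translation and rigid motions, only finitely many reference configurations arise. Under this change of variables $|w|_{H^s(S_T)}^2 = h_T^{d-2s}\,|\widehat w|_{H^s(\widehat S_T)}^2$ and $|v|_{H^t(S_{S_T})}^2 = h_T^{d-2t}\,|\widehat v|_{H^t(\widehat S_{S_T})}^2$. On the reference patch one combines the continuous embedding $H^t \hookrightarrow H^s$ (valid since $t > s$) with a Deny--Lions argument: because $\SZ$ fixes affine polynomials one has, for any $p \in \mathcal{P}_1$, $\|\widehat v - \SZ \widehat v\|_{H^t(\widehat S_T)} \le C \|\widehat v - p\|_{H^t(\widehat S_{S_T})}$, and minimizing over $p$ yields $C |\widehat v|_{H^t(\widehat S_{S_T})}$, which covers the entire range $t \in (s,2]$. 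Scaling back produces exactly the factor $h_T^{2(t-s)}$.

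For the weighted estimate \eqref{eq:weighted_interpolation} the new feature is that $\delta(x,y)^{2\gamma}$ appears only in the target seminorm and is not uniform across the patch. I would split the analysis according to the relative position of $T$ to $\pp\Omega$. If $\dist(S_{S_T},\pp\Omega) \gtrsim h_T$ then, by the Lipschitz character of $\delta$ and \eqref{eq: mesh function}, $\delta(x,y) \sim \dist(T,\pp\Omega)$ throughout $S_{S_T}$, so the weight can be factored out of both sides and the unweighted case applies directly. If $T$ touches (or nearly touches) $\pp\Omega$, one has instead $\delta(x,y) \le C h_T$ on $S_T$, which furnishes an extra factor $h_T^{2\gamma}$ on the left while on the right one invokes a weighted Bramble--Hilbert inequality against the vanishing weight on the reference patch. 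Both regimes concatenate to give the announced scaling $h_T^{2(t-s-\gamma)}$.

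The main obstacle is the boundary regime of the weighted estimate: one needs a weighted Poincar\'e / Bramble--Hilbert inequality on the reference patch in which the weight $\delta(x,y)^{2\gamma}$ may vanish but remains admissible (it lies in the Muckenhoupt class $A_2$ for the relevant range of $\gamma$), together with a verification that the reference-patch constants are independent of the particular boundary-touching configuration, which again follows from shape-regularity reducing the situation to finitely many reference patches. The unweighted part is classical; the weighted estimate is the only genuinely technical point and is carried out along the lines of the references cited alongside the proposition.
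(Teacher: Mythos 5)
The paper does not supply a proof of Proposition~\ref{prop:app_SZ}; it only cites the earlier literature (\cite{AcosBort2017fractional,BoNoSa18,CiarletJr}) and sketches the surrounding ingredients (localization via~\eqref{eq:localization}, Poincar\'e on patches). Your scaling-to-reference-patch / Bramble--Hilbert strategy is the standard route those references follow, so the overall plan is sound. There are, however, two concrete defects.

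First, the chain
$\|\widehat v - \SZ \widehat v\|_{H^t(\widehat S_T)} \le C \|\widehat v - p\|_{H^t(\widehat S_{S_T})}$
is not meaningful for $t>3/2$: $\SZ\widehat v$ is a continuous piecewise-linear function and therefore lies in $H^{3/2-\varepsilon}$ but not in $H^t$ for $t\in(3/2,2]$, so the left-hand side is generically infinite. The correct form of the Deny--Lions step measures the error in the $H^s$-seminorm, which is what the left-hand side of~\eqref{eq:interpolation} actually requires: use $\widehat v - \SZ\widehat v = (\widehat v - p) - \SZ(\widehat v - p)$, estimate $|\widehat v - p|_{H^s(\widehat S_T)}$ by $\|\widehat v - p\|_{H^t(\widehat S_{S_T})}$ via the embedding $H^t\hookrightarrow H^s$, bound $|\SZ(\widehat v - p)|_{H^s(\widehat S_T)}$ by $C\|\widehat v-p\|_{L^2(\widehat S_{S_T})}$ using $L^2$-stability of $\SZ$ together with an inverse estimate on the reference patch, and only then minimize over $p$. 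As written, your intermediate inequality is false.

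Second, in the weighted boundary regime your argument is too loose to be checked. The statement that $\delta\le Ch_T$ ``furnishes an extra factor $h_T^{2\gamma}$ on the left'' does not parse: the left-hand side of~\eqref{eq:weighted_interpolation} carries no weight, and the weight $\delta^{2\gamma}$ \emph{degrades} the right-hand side near $\pp\Omega$ (it vanishes there), so one cannot deduce the weighted bound from the unweighted one by factoring the weight out. This is precisely where a genuine weighted Bramble--Hilbert / Poincar\'e inequality is needed, and you correctly identify that. But the accompanying claim that shape-regularity reduces the boundary-touching configurations to ``finitely many reference patches'' is not accurate: only the simplex geometry is so reducible; the relative position and shape of $\pp\Omega$ within the reference patch form a continuum, and one must argue uniformity of the reference-patch constant over that continuum (e.g., by uniform doubling/Muckenhoupt bounds for the rescaled distance weight). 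Those two points are the genuine content of the weighted estimate and are left unaddressed.
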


For the purpose of this paper, we shall make use of a variant of  \eqref{eq:localization}. Even though the fractional-order norms can be localized, it is clear that the $H^s$-inner product of two arbitrary functions cannot: it suffices to consider two positive functions with supports sufficiently far from each other. The following observation is due to Faermann \cite[Lemma 3.1]{Faermann}. Since we use it extensively, we reproduce it here for completeness.

\begin{lemma}[symmetry]\label{L:symmetry}
  For any $v,w\in L^1(\Omega)$ and $\rho:\R^+\to\R^+$ bounded, there holds
  \[
  \sum_{T\in\Th} \int_T \int_{S_T^c}  v(y) \, w(x) \, \rho(|x-y|) dy dx = 
  \sum_{T\in\Th} \int_T \int_{S_T^c}  v(x) \, w(y) \, \rho(|x-y|) dy dx.
  \]
\end{lemma}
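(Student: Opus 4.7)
The plan is to reduce both sides to the same double integral over $\Omega\times\Omega$ by exchanging summation and integration, and to exploit the fact that the \emph{pair} condition ``$T$ contains $x$ and $y\notin S_T$'' can be recast as a symmetric relation between the elements containing $x$ and $y$.

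First I would rewrite the left-hand side as
\[
\mathrm{LHS} = \iint_{\Omega\times\Omega} v(y)\,w(x)\,\rho(|x-y|)\,\chi(x,y)\,dy\,dx,
\quad \chi(x,y) := \sum_{T\in\Th} \mathbf{1}_T(x)\,\mathbf{1}_{S_T^c}(y),
\]
which is justified by Tonelli since $v,w\in L^1(\Omega)$ and $\rho$ is bounded. The key observation is that, except on a set of Lebesgue measure zero (namely the union of the $(d-1)$-dimensional faces), each $x\in\Omega$ lies in the interior of a unique element $T_x$, and likewise for $y$. For such generic $(x,y)$ the sum defining $\chi$ reduces to $\mathbf{1}_{\{y\in S_{T_x}^c\}}$; moreover, by definition of the star,
\[
y\in S_{T_x}^c \;\Longleftrightarrow\; T_y\cap T_x=\emptyset \;\Longleftrightarrow\; x\in S_{T_y}^c ,
\]
so $\chi(x,y)=\chi(y,x)$ almost everywhere.

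Next I would perform the analogous rewriting of the right-hand side,
\[
\mathrm{RHS}=\iint_{\Omega\times\Omega} v(x)\,w(y)\,\rho(|x-y|)\,\chi(x,y)\,dy\,dx,
\]
and relabel the dummy variables $x\leftrightarrow y$. Since $\rho(|x-y|)=\rho(|y-x|)$ and, by the previous step, $\chi(y,x)=\chi(x,y)$ a.e., this relabeling turns the right-hand side into exactly the expression obtained for the left-hand side. Equality then follows.

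There is no real obstacle, only a bookkeeping point that deserves care: the measure-zero set of inter-element faces, where several elements $T$ contain a given point $x$, must be explicitly argued to contribute nothing (this is immediate because it has zero $d$-dimensional Lebesgue measure and $v,w,\rho$ are bounded on the relevant sets in an integrable sense). A cleaner alternative, if one wishes to avoid invoking the ``generic'' element $T_x$, is to note that $\chi(x,y)$ counts pairs $(T,T')$ with $x\in T$, $y\in T'$ and $T\cap T'=\emptyset$, and this pair-counting function is manifestly symmetric in $(x,y)$. Either viewpoint reduces the lemma to a swap of dummy variables together with the elementary symmetry of the disjointness relation among elements of $\Th$.
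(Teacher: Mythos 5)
Your proof is correct and takes essentially the same approach as the paper's: both rewrite the sum via Fubini/Tonelli and then reduce the identity to the symmetry of the relation ``$T$ and $T'$ are disjoint elements'' (equivalently, $y\notin S_{T_x}$ iff $x\notin S_{T_y}$ for a.e. $(x,y)$). The paper packages this by introducing an auxiliary function $\psi(y)=\sum_{T}\chi_{S_T^c}(y)\int_T w(x)\rho(|x-y|)\,dx$ and simplifying it on each fixed element $T'$, whereas you work directly with the kernel $\chi(x,y)=\sum_T \mathbf{1}_T(x)\mathbf{1}_{S_T^c}(y)$ and show it is symmetric a.e.; these are two presentations of the same argument, and your explicit remark about the measure-zero set of inter-element faces is a welcome clarification of a point the paper handles with a brief ``a.e.''
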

\begin{proof}
We note that, for any two elements $T, T' \in \Th$, it holds $T' \in S_T^c$ if and only if $T \in S_{T'}^c$. Thus, we can write
\[ \begin{aligned}
\sum_{T\in\Th} \int_T \int_{S_T^c}  v(y) \, w(x) \, \rho(|x-y|) dy dx & =
\sum_{T\in\Th} \sum_{T'\in S_T^c} \int_T \int_{T'}  v(y) \, w(x) \, \rho(|x-y|) dy dx \\ 
& = \sum_{T' \in \Th} \sum_{T\in S_{T'}^c} \int_T \int_{T'} v(y) \, w(x) \, \rho(|x-y|) dy dx. 
\end{aligned}
\] 
The proof follows by applying Fubini's Theorem and interchanging the roles of $x$ and $y$.
\end{proof}

\begin{proposition}[equivalent fractional inner product]\label{prop:faermann_inner_products}
Let $v,w \in H^s(\Omega)$. Then, it holds that
\begin{equation*} \begin{split}
(v,w)_{H^s(\Omega)} =  \sum_{T \in \Th} & \bigg[ \int_T \int_{S_T} \frac{(v (x) - v (y))(w(x)-w(y))}{|x-y |^{d+2s}} \, dy \, dx \\
& + 2 \int_T \int_{S_T^c} \frac{v(x) \, (w (x) - w (y))}{|x-y |^{d+2s}} \, dy \, dx \bigg].
\end{split}\end{equation*}
\end{proposition}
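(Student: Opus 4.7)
The plan is to start from the definition $(v,w)_{H^s(\Omega)} = \int_\Omega\int_\Omega \frac{(v(x)-v(y))(w(x)-w(y))}{|x-y|^{d+2s}}\,dy\,dx$, partition the outer integral as $\sum_{T\in\Th}\int_T$, and decompose the inner integral according to $\int_\Omega = \int_{S_T}+\int_{S_T^c}$. The pieces on $T\times S_T$ are already in the desired form and yield the first sum in the claim. The remaining task is to show that the ``far'' contributions satisfy
\[
\sum_{T\in\Th}\int_T\int_{S_T^c}\frac{(v(x)-v(y))(w(x)-w(y))}{|x-y|^{d+2s}}\,dy\,dx = 2\sum_{T\in\Th}\int_T\int_{S_T^c}\frac{v(x)(w(x)-w(y))}{|x-y|^{d+2s}}\,dy\,dx.
\]

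To verify this identity, I would expand $(v(x)-v(y))(w(x)-w(y)) = v(x)(w(x)-w(y)) - v(y)(w(x)-w(y))$ and massage the second piece using Lemma \ref{L:symmetry} with $\rho(r) = r^{-(d+2s)}$; note that although $\rho$ is singular at $0$, on the integration region $T\times S_T^c$ we have $|x-y| \ge \dist(T,S_T^c) > 0$, so the kernel is effectively bounded and the hypothesis is satisfied (after truncating $\rho$ outside the relevant range, if one wishes to apply the lemma literally). Writing $v(y)(w(x)-w(y)) = v(y)w(x) - v(y)w(y)$, one application of the lemma replaces $v(y)w(x)$ by $v(x)w(y)$, and a second application---with the product $v\cdot w \in L^1(\Omega)$ playing one role and the constant function $1$ the other---replaces $v(y)w(y)$ by $v(x)w(x)$. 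Combining these, the $v(y)$ contribution equals $-\sum_T\int_T\int_{S_T^c} v(x)(w(x)-w(y))/|x-y|^{d+2s}\,dy\,dx$, so adding it to the $v(x)$ contribution doubles the latter, as required.

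The main technical subtlety, and the step I expect to be the main obstacle, is recognising that Lemma \ref{L:symmetry} can be applied to the ``diagonal'' quantity $v(y)w(y)$ at all, which is done by regarding the product $vw$ as one of the factors in the lemma and the constant function $1$ as the other. Once this is observed, the remainder of the argument is straightforward algebraic bookkeeping, with integrability of every intermediate term guaranteed by $v,w \in H^s(\Omega) \subset L^2(\Omega)$ together with the uniform lower bound on $|x-y|$ over $T\times S_T^c$.
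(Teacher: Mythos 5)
Your proposal is correct and follows essentially the same route as the paper: both decompose the far-field contribution, expand the product, and invoke Lemma~\ref{L:symmetry} (with a suitably truncated kernel) twice to swap $v(y)w(x)\leftrightarrow v(x)w(y)$ and $v(y)w(y)\leftrightarrow v(x)w(x)$. Your explicit observation that the second swap amounts to applying the lemma with the pair $(vw,1)$ is a nice clarification of what the paper uses implicitly, but the underlying argument is the same.
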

\begin{proof}
It suffices to write
\begin{equation*} \begin{split}
(v,w)_{H^s(\Omega)} =  \sum_{T \in \Th} & \bigg[ \int_T \int_{S_T} \frac{(v (x) - v (y))(w(x)-w(y))}{|x-y |^{d+2s}} dy dx  \\ 
& +  \int_T \int_{S_T^c} \frac{(v (x) - v (y))(w(x)-w(y))}{|x-y |^{d+2s}} dy dx \bigg].
\end{split}\end{equation*}
and notice that
\[
 \sum_{T \in \Th} \int_T \int_{S_T^c} \frac{v(x) w(x)}{|x-y |^{d+2s}} \, dy \, dx =  \sum_{T \in \Th} \int_T \int_{S_T^c} \frac{v(y) w(y)}{|x-y |^{d+2s}} \, dy \, dx ,
\]
and
\[
 \sum_{T \in \Th} \int_T \int_{S_T^c} \frac{v(x) w(y)}{|x-y |^{d+2s}} \, dy \, dx =  \sum_{T \in \Th} \int_T \int_{S_T^c} \frac{v(y) w(x)}{|x-y |^{d+2s}} \, dy \, dx
\]
in view of Lemma \ref{L:symmetry} (symmetry) with $\rho(t)=t^{-d-2s} \chi_{[\rho_{min},\infty)}(t)$, where $\rho_{min} = \min_{T \in \Th} \rho_T$ and we recall that $\rho_T$ is the diameter of the largest ball contained in $T$. This completes the proof.
\end{proof}

\begin{remark}[fractional inner product on subdomains]\label{remark: local version}
Proposition \ref{prop:faermann_inner_products} is also valid for any subdomain $D\subset \Om$, i.e.
\[
\begin{aligned}
(v,w)_{H^s(D)} =  \sum_{T \in \Th} &\bigg[ \int_{T\cap D} \int_{S_T\cap D} \frac{(v (x) - v (y))(w(x)-w(y))}{|x-y |^{d+2s}} \, dy \, dx \\ &+ 2 \int_{T\cap D} \int_{S_T^c\cap D} \frac{v(x) \, (w (x) - w (y))}{|x-y |^{d+2s}} \, dy \, dx \bigg].
\end{aligned}
\]
\end{remark}

Next, we write some inverse estimates that we shall use in what follows.
By using standard scaling arguments, one can immediately derive the estimate
\begin{equation}\label{eq: inverse Hs to L2}
\| v_h\|_{H^t(T)} \le C_{inv} h_T^{s-t} \|v_h\|_{H^s(T)}, \quad\forall v_h\in \mathbb{V}_h, \quad 0 \le s\le t \le 1.
\end{equation}
Let $\eta : \Omega \to \R$ be a fixed smooth function. We shall also need the following variant of \eqref{eq: inverse Hs to L2} with $t=1$, whose proof follows immediately because the space $\eta \mathbb{V}_h$ is finite dimensional:
\begin{equation}\label{eq: weighted inverse}
|\eta v_h|_{H^1(S_T)}\le Ch_T^{s-1}|\eta v_h|_{H^s(S_T)}
\quad \forall v_h\in \mathbb{V}_h, \ T \in \Th, \ 0\le s \le 1.
\end{equation}

\subsection{Energy-norm error estimates} \label{sec:energy-norm_estimates}
The discrete counterpart of \eqref{eq:weak_linear} reads: find $u_h \in \mathbb{V}_h$ such that
\begin{equation}\label{eq:weak_linear_discrete}
(u_h, v_h)_s = \langle f, v_h \rangle \quad \forall v_h \in \mathbb{V}_h.
\end{equation}
Subtracting \eqref{eq:weak_linear_discrete} from \eqref{eq:weak_linear} we get Galerkin orthogonality
\begin{equation}\label{eq:orthogonality}
  (u-u_h, v_h)_s = 0 \quad\forall v_h \in \mathbb{V}_h.
\end{equation}
The best approximation property
\begin{equation}\label{eq:best_approximation_linear}
\|u - u_h \|_{\tHs} = \min_{v_h \in \mathbb{V}_h} \|u - v_h \|_{\tHs}
\end{equation}
follows immediately from \eqref{eq:orthogonality}. Consequently, in view of the regularity estimates of $u$ discussed in Section \ref{sec:variational_form}, the only ingredient missing to derive convergence rates in the energy norm is some {\em global} interpolation estimate. Even though the bilinear form $(\cdot, \cdot)_s$ involves integration over $\Omega\times\rd$, it is possible to prove that the corresponding energy norm $\|\cdot\|_{\tHs}$ is bounded in terms of fractional-order norms $\|\cdot\|_{H^s(\Omega)}$ on $\Omega$ by resorting to fractional Hardy inequalities (see \cite{AcosBort2017fractional}). 

Therefore, for {\it quasi-uniform} meshes, if $s\neq \frac{1}{2}$ one can simply combine \eqref{eq:localization} and \eqref{eq:interpolation} with a fractional Hardy inequality \cite[Theorem 1.4.4.4]{Grisvard} to replace $\|\cdot\|_{\widetilde H^s(\Omega)}$ by $\|\cdot\|_{H^s(\Omega)}$ \cite{AcosBort2017fractional,BoNoSa18} and obtain for $t\in(s,1)$
\begin{equation} \label{eq:global_interpolation}
\| v - \SZ v\|_{\widetilde H^s(\Omega)} \le C(\Omega,d,s,\sigma, t) \, h^{t-s} |v|_{H^t(\Omega)} \quad \forall v \in H^t (\Omega).
\end{equation}
In case $s = \frac{1}{2}$, one cannot apply a fractional Hardy inequality. Instead, one may exploit the precise blow-up of the Hardy constant of $H^{\frac{1}{2}+\epsilon}(\Omega)$ as $\epsilon\downarrow 0$ to deduce \cite[\S 3.4]{AcosBort2017fractional}, \cite[Theorem 4.1]{BoNoSa18} for $t\in(\frac12,1)$ and $\eps \in (0,t-\frac12)$
\begin{equation} \label{eq:global_interpolation_1/2}
  \| v - \SZ v\|_{\widetilde H^{\frac12}(\Omega)} \le \frac{C(\Omega,d,\sigma, t)}{\eps} \,
  h^{t-\frac12-\eps} |v|_{H^t(\Omega)} \quad \forall v \in H^t (\Omega).
\end{equation}
Alternatively, one could derive either \eqref{eq:global_interpolation} or \eqref{eq:global_interpolation_1/2} by simply interpolating standard global $L^2$ and $H^1$ estimates. However, if we aim to exploit Theorem \ref{T:weighted_regularity} (weighted Sobolev estimate), then we require a suitable mesh refinement near the boundary of $\Omega$. 
For that purpose, following \cite[Section 8.4]{Grisvard}
we now let the parameter $h$ represent the local mesh size in the interior of $\Omega$, and assume that, besides being shape-regular, the family $\{\Th\}$ is such that there is a number $\mu\ge1$ such that for every $T \in \Th$
 \begin{equation} \label{eq:H}
 h_T \leq C(\sigma) \left\lbrace
 \begin{array}{rl}
   h^\mu, & \mbox{if } T \cap \partial \Omega \neq \emptyset, \\
   h \dist(T,\pp \Omega)^{(\mu-1)/\mu}, & \mbox{if } T \cap \partial \Omega = \emptyset.
 \end{array} \right.
\end{equation} 
This construction yields a total number of degrees of freedom (see \cite{Babuska:79, BoNoSa18})
\begin{equation} \label{eq:dofs}
 N = \dim \mathbb{V}_h \approx
\left\lbrace
 \begin{array}{rl}
    h^{-d}, & \mbox{if } \mu < \frac{d}{d-1}, \\
    h^{-d} | \log h |, & \mbox{if } \mu = \frac{d}{d-1}, \\
    h^{(1-d)\mu}, & \mbox{if } \mu > \frac{d}{d-1}.
  \end{array} \right.
\end{equation}
Thus, if $\mu \le \frac{d}{d-1}$ the interior mesh size $h$ and the dimension $N$ of $\mathbb{V}_h$ satisfy the optimal relation $h \simeq N^{-1/d}$ (up to logarithmic factors if $\mu = \frac{d}{d-1}$). As anticipated in Remark \ref{rmk:weights} (optimal parameters), the weight $\gamma$ in Theorem \ref{T:weighted_regularity} (weighted Sobolev estimate) needs to be related to the parameter $\mu$, which satisfies \eqref{eq:H}. To do so, we combine \eqref{eq:localization} with either \eqref{eq:weighted_interpolation} or \eqref{eq:interpolation}, depending on whether $S_{S_T}$ intersects $\partial\Omega$ or not, to find the relation $\gamma = (t-s) \left( \frac{\mu-1}{\mu} \right)$ for $t\in(s,2]$. If $s\neq \frac{1}{2}$, it suffices to use a fractional Hardy inequality to replace $\|\cdot\|_{\widetilde H^s(\Omega)}$ by $\|\cdot\|_{H^s(\Omega)}$ \cite{AcosBort2017fractional,BoNoSa18} and obtain
\begin{equation} \label{eq:global_weighted_interpolation}
\| v - \SZ v\|_{\widetilde H^s(\Omega)} \le 
\left\lbrace\begin{array}{rl}
C  h^{t-s} |v|_{H^t_\gamma(\Omega)} & \mbox{ if } s \neq \frac{1}{2}, \\ 
\frac{C}{\eps}  h^{t-s-\eps} |v|_{H^t_\gamma(\Omega)} & \mbox{ if } s = \frac{1}{2},
\end{array} \right.
\end{equation}
for all $v \in H^t_\gamma (\Omega)$ with a constant that depends on $\Omega,d,s,\sigma,t$ and $\gamma$. On the other hand, if $s=\frac{1}{2}$, we choose $\gamma = (t-s) \left( \frac{\mu-1}{\mu} \right) - \eps$, where $\eps>0$ is sufficiently small, and exploit the explicit blow-up of the Hardy constant of $H^{\frac{1}{2}+\epsilon}(\Omega)$ as $\epsilon \downarrow 0$, as we did earlier with \eqref{eq:global_interpolation_1/2}, to derive the second estimate in \eqref{eq:global_weighted_interpolation}. We point out that \eqref{eq:global_weighted_interpolation} does not follow by interpolation of global estimates.

We gather the energy error estimates for quasi-uniform and graded meshes in a single theorem.

\begin{theorem}[global energy-norm convergence rates] \label{T:conv_linear}
Let $\Omega \subset \R^d$ be a bounded Lipschitz domain, and $u$ denote the solution to \eqref{eq:weak_linear} and denote by $u_h \in \mathbb{V}_h$ the solution of the discrete problem \eqref{eq:weak_linear_discrete}, computed over a mesh $\Th$ consisting of elements with maximum diameter $h$. If $f \in L^2(\Omega)$, then we have
\begin{equation} \label{eq:conv_Hs}
\|u - u_h \|_{\tHs}  \le C(\Omega,d,s,\sigma) \,  h^\alpha |\log h|^{\kappa} \, \|f\|_{L^2(\Omega)},
\end{equation}
where $\alpha = \min \{s, \frac{1}{2} \}$ and $\kappa = \xi$ if $s \ne \frac{1}{2}$, $\kappa = 1+\xi$ if $s=\frac12$, and $\xi\ge{1/2}$ is the constant in Theorem \ref{T:Besov_regularity}.
Additionally, if $\Omega$ satisfies an exterior ball condition, let $\beta >  0$ be such that
\begin{equation} \label{eq:beta_mu}
\beta \ge \left\lbrace
\begin{array}{rl}
2-2s & \mbox{if } d  = 1, \\
\frac{d}{2(d-1)} - s & \mbox{if } d \ge 2,
\end{array}
\right.
\quad \mbox{and } \quad
\mu = \left\lbrace
\begin{array}{rl}
2-s & \mbox{if } d  = 1, \\
\frac{d}{d-1} & \mbox{if } d \ge 2.
\end{array} \right.
\end{equation}
Then, if $f \in C^{\beta}(\overline{\Omega})$, and the family $\{\Th\}$ satisfies \eqref{eq:H} with $\mu$ as above, we have
\begin{equation} \label{eq:conv_Hs_graded}
\|u - u_h \|_{\tHs}  \le C(\Omega,s,\sigma)  
\left\lbrace
\begin{array}{rl}
h^{2-s} |\log h|^{\kappa-1} \|f\|_{C^{\beta}(\overline{\Omega})} & \mbox{if } d = 1, \\
h^{\frac{d}{2(d-1)}} |\log h|^\kappa \|f\|_{C^{\beta}(\overline{\Omega})} & \mbox{if } d \ge 2,
\end{array} \right.
\end{equation}
where $\kappa = 1$ if $s \ne \frac12$ and $\kappa = 2$ if $s = \frac12$.
In terms of the number of degrees of freedom $N$, the estimate \eqref{eq:conv_Hs_graded} reads
\begin{equation} \label{eq:conv_Hs_graded_N}
\|u - u_h \|_{\tHs}  \le C(\Omega,s,\sigma)  
\left\lbrace
\begin{array}{rl}
N^{-(2-s)} (\log N)^{\kappa-1} \|f\|_{C^{\beta}(\overline{\Omega})} & \mbox{if } d = 1, \\
N^{-\frac{1}{2(d-1)}} (\log N)^{\frac{1}{2(d-1)} + \kappa} \|f\|_{C^{\beta}(\overline{\Omega})} & \mbox{if } d \ge 2.
\end{array} \right.
\end{equation}
\end{theorem}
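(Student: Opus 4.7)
The plan is to reduce the energy-norm error to an interpolation error via the best approximation property \eqref{eq:best_approximation_linear}, and then combine the regularity of $u$ from Section \ref{sec:regularity} with the global interpolation estimates \eqref{eq:global_interpolation}, \eqref{eq:global_interpolation_1/2} or \eqref{eq:global_weighted_interpolation} to conclude. The bound in terms of degrees of freedom \eqref{eq:conv_Hs_graded_N} then follows from \eqref{eq:conv_Hs_graded} and \eqref{eq:dofs} by a direct substitution.

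For the quasi-uniform case \eqref{eq:conv_Hs}, I would first invoke Theorem \ref{T:Besov_regularity} with $r=0$ to place $u$ in the Besov space $B^{s+\alpha}_{2,\infty}(\Om)$, and then use the elementary embedding into $H^{s+\alpha-\eps}(\Om)$ whose operator norm grows like $\eps^{-1}$. When $s\neq 1/2$, inserting this into \eqref{eq:global_interpolation} with $t = s+\alpha-\eps$ yields a bound of order $h^{\alpha-\eps}\eps^{-1}\|f\|_{L^2(\Om)}$; since $h^{-\eps}/\eps$ attains its minimum at $\eps\simeq 1/|\log h|$ and equals a constant multiple of $|\log h|$ there, this reproduces the $|\log h|$ factor and matches the claim with $\kappa=0$. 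When $s=1/2$, the interpolation estimate \eqref{eq:global_interpolation_1/2} itself carries a $1/\eps'$ constant and an $h^{-\eps'}$ loss, so a joint optimization in $\eps$ and $\eps'$ produces one additional logarithmic factor and gives $|\log h|^{1+\kappa}$ with $\kappa=1$.

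For the graded-mesh estimates \eqref{eq:conv_Hs_graded} and \eqref{eq:conv_Hs_graded_N} I would replace Theorem \ref{T:Besov_regularity} by Theorem \ref{T:weighted_regularity} and \eqref{eq:global_interpolation} by \eqref{eq:global_weighted_interpolation}. The weight parameter $\gamma$ is coupled to $\mu$ in \eqref{eq:H} through the identity $\gamma = (t-s)(\mu-1)/\mu$ that emerges from summing \eqref{eq:weighted_interpolation} over $T \in \Th$, while Theorem \ref{T:weighted_regularity} constrains the admissible range of $t$ through $\gamma$ and the H\"older exponent $\beta$ of $f$. Substituting the dimension-dependent values of $\mu$ and pushing $t$, $\gamma$ and $\beta$ to the thresholds singled out in Remark \ref{rmk:weights}, one obtains the target rates $h^{2-s}$ in $d=1$ and $h^{d/(2(d-1))}$ in $d\ge 2$; the blow-ups $1/\eps$ and $1/\sqrt{\eps}$ of the regularity constant, together with the extra $1/\eps$ of \eqref{eq:global_interpolation_1/2} for $s=1/2$, are optimized as in the quasi-uniform case to yield the stated powers of $|\log h|$. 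Finally, the additional $(\log N)^{1/(2(d-1))}$ in \eqref{eq:conv_Hs_graded_N} for $d\ge 2$ comes from the logarithmic correction at $\mu = d/(d-1)$ in \eqref{eq:dofs}.

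The main obstacle I anticipate is the careful bookkeeping of the several small parameters entering the regularity and interpolation estimates and their simultaneous optimization. This is especially delicate at the threshold $s=1/2$, where two independent $\eps$-losses must be balanced, and at the critical grading $\mu = d/(d-1)$, where \eqref{eq:dofs} itself is degenerate; only when these optimizations are carried out compatibly do the precise powers of $|\log h|$ and $\log N$ in \eqref{eq:conv_Hs}, \eqref{eq:conv_Hs_graded} and \eqref{eq:conv_Hs_graded_N} emerge.
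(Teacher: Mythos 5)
Your proposal is correct and follows essentially the same route as the paper's proof: best approximation via \eqref{eq:best_approximation_linear}, regularity from Theorem \ref{T:Besov_regularity} (for quasi-uniform meshes) or Theorem \ref{T:weighted_regularity} (for graded meshes), the global interpolation estimates \eqref{eq:global_interpolation}, \eqref{eq:global_interpolation_1/2}, \eqref{eq:global_weighted_interpolation} with the parameter coupling $\gamma=(t-s)(\mu-1)/\mu$, optimization at $\eps\simeq|\log h|^{-1}$, and conversion to degrees of freedom via \eqref{eq:dofs}. The only cosmetic difference is that the paper uses the \emph{same} $\eps$ in the regularity and interpolation steps (producing $h^{\alpha-2\eps}/\eps^2$ directly) rather than introducing two independent parameters $\eps,\eps'$, but this leads to the same logarithmic factors after optimization.
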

\begin{proof}  
If $s \neq \frac{1}{2}$, we combine \eqref{eq:best_approximation_linear} and \eqref{eq:global_interpolation} with \eqref{eq:regularity} to obtain
\begin{equation}\label{eq:energy-est}
  \| u - u_h \|_{\tHs} \le C h^{\theta-\eps} |u|_{H^{s+\theta-\eps}(\Omega)} \le C \frac{h^{\theta-\eps}}{\eps^\xi} \|f\|_{L^2(\Omega)},
\end{equation}
where $\theta = \min \{ s - \eps, 1/2 \}$, namely $\theta = \alpha$ if $s > 1/2$ and $\theta = \alpha - \eps$ if $s\le 1/2$.
In case $s=\frac{1}{2}$, instead of \eqref{eq:global_interpolation} we use \eqref{eq:global_interpolation_1/2} with the same $\eps$ as in \eqref{eq:regularity} to get
\begin{equation}\label{eq:energy-est_1/2}
  \| u - u_h \|_{\tHs} \le \frac{C}{\eps} h^{\theta-2\eps} |u|_{H^{s+\theta-\eps}(\Omega)} \le C \frac{h^{\theta-2\eps}}{\eps^{1+\xi}} \|f\|_{L^2(\Omega)}.
\end{equation}
Moreover, coupling \eqref{eq:best_approximation_linear}, the first estimate in \eqref{eq:global_weighted_interpolation} and Theorem \ref{T:weighted_regularity} (weighted Sobolev estimate) with $t = 2 - \eps$ and $\gamma = 2 - s$ if $d = 1$ and $t = s + \frac{d}{2(d-1)} - \eps d$ and $\gamma = \frac{1}{2(d-1)} - \eps$ if $d \ge 2$ yields for $s\neq \frac{1}{2}$
\begin{equation}\label{eq:energy-weight}
 \| u - u_h \|_{\tHs} \le C h^{t-s} |u|_{H^{t}_{\gamma}(\Omega)} \le
\left\lbrace
\begin{array}{rl}
C h^{2-s -\eps} \|f\|_{C^{\beta}(\overline{\Omega})} & \mbox{if } d = 1, \\
 \frac{C}{\eps}  h^{\frac{d}{2(d-1)} - \eps d} \|f\|_{C^{\beta}(\overline{\Omega})} & \mbox{if } d \ge 2;
\end{array} \right.
\end{equation}
analogous estimates hold if $s = \frac{1}{2}$ but with an additional factor $\eps^{-1}{h^{-\eps}}$
according to the second estimate in \eqref{eq:global_weighted_interpolation}.
Upon taking $\eps=|\log h|^{-1}$, we end up with \eqref{eq:conv_Hs} and \eqref{eq:conv_Hs_graded}, as asserted. Inequality \eqref{eq:conv_Hs_graded_N} follows by the choice of $\mu$ and \eqref{eq:dofs}.
\end{proof}

\begin{remark}[exponents of logarithms]
In case $s \ge \frac{d}{2(d-1)}$, which can only happen if $d\ge 3$, the exponents of logarithms in Theorem \ref{T:conv_linear} can actually be reduced by a factor of $\frac{1}{2}$ (see discussion in Remark \ref{rmk:weights}).
\end{remark}

\begin{remark}[optimality]
The convergence rates derived in Theorem \ref{T:conv_linear} are theoretically optimal for shape-regular elements. Nevertheless, because we deal with continuous piecewise linear basis functions, one would expect convergence rate $\frac{-(2-s)}{d}$ with respect to $N$. It is remarkable that such a rate can only be achieved if $d=1$ upon grading meshes according to \eqref{eq:H}. For dimensions $d\ge2$, anisotropic meshes are required in order to obtain optimal convergence rates. This limitation stems from the algebraic singular layer \eqref{eq:bdry_behavior} and becomes more apparent as $d$ increases, but comparison of \eqref{eq:conv_Hs} and \eqref{eq:conv_Hs_graded} shows that in all cases graded meshes improve the convergence rates with respect to $N$.

We also point out that setting the grading parameter to be $\mu > \frac{d}{d-1}$ would lead to a higher rate in \eqref{eq:conv_Hs_graded} in terms of the interior mesh size $h$. However, the resulting rate in \eqref{eq:conv_Hs_graded_N} would be the same as for $\mu = \frac{d}{d-1}$ (up to logarithmic factors) but the finite element matrix would turn out to be worse conditioned.
\end{remark}

\subsection{$L^2$-norm error estimates} \label{sec:conv_L2}
Upon invoking the new regularity estimates of {Theorem} \ref{T:Besov_regularity} for data $f \in L^2(\Omega)$, we now perform a standard Aubin-Nitsche duality argument to derive novel convergence rates in $L^2(\Omega)$. We distinguish between quasi-uniform and graded meshes.

\begin{proposition}[convergence rates in $L^2(\Omega)$ for quasi-uniform meshes] \label{prop:Aubin-Nitsche}
  Let $\Omega$ be a bounded Lipschitz domain. If $f \in L^2(\Omega)$, then for all $0<s<1$ we have
\begin{equation} \label{eq:AN}
\|u - u_h\|_{L^2(\Omega)} \le C h^{2\alpha} | \log h |^{2\kappa} \| f \|_{L^2(\Omega)},
\end{equation}
where $\alpha = \min \{s, \frac{1}{2} \}$, $\kappa = \xi$ if $s \ne \frac12$, $\kappa = 1+\xi$ if $s = \frac12$, and $\xi {\ge 1/2}$ is the constant in \eqref{eq:regularity}.
\end{proposition}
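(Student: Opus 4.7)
The plan is a textbook Aubin--Nitsche duality argument, symmetric in $u$ and the dual solution, that simply squares the energy-norm estimate \eqref{eq:conv_Hs}. Since $u-u_h \in \tHs \subset L^2(\Omega)$, define the dual solution $w \in \tHs$ as the solution to
\[
(w,v)_s = (u-u_h, v)_{L^2(\Omega)} \quad \forall v \in \tHs.
\]
By Lax--Milgram this $w$ exists and is unique, and since the right-hand side $u-u_h$ belongs to $L^2(\Omega)$, Theorem \ref{T:Besov_regularity} (with $r=0$) applies to $w$ and gives the same Besov/$H^s$ regularity as for $u$ when $f\in L^2(\Omega)$.

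Testing the dual identity with $v = u - u_h \in \tHs$ and then invoking Galerkin orthogonality \eqref{eq:orthogonality} against any $\Pi_h w \in \mathbb{V}_h$, I get
\[
\|u-u_h\|_{L^2(\Omega)}^2 = (w, u-u_h)_s = (w - \Pi_h w, u - u_h)_s \le \|w - \Pi_h w\|_{\tHs} \, \|u-u_h\|_{\tHs}.
\]
Now both factors are energy-norm approximation errors: the second is bounded directly by Theorem \ref{T:conv_linear}, giving $C h^\alpha |\log h|^{1+\kappa}\|f\|_{L^2(\Omega)}$; the first is bounded by the same argument applied to $w$ (using \eqref{eq:global_interpolation} or \eqref{eq:global_interpolation_1/2} combined with Theorem \ref{T:Besov_regularity} with $r=0$), which yields $C h^\alpha |\log h|^{1+\kappa}\|u-u_h\|_{L^2(\Omega)}$.

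Multiplying the two bounds and dividing by $\|u-u_h\|_{L^2(\Omega)}$ (if it is nonzero; otherwise the estimate is trivial) gives the desired \eqref{eq:AN}. The only subtle step is that when $s=1/2$ the interpolation estimate \eqref{eq:global_interpolation_1/2} carries an $\eps^{-1}$ blow-up and an $h^{-\eps}$ loss on each application; as in the proof of Theorem \ref{T:conv_linear} one absorbs these by choosing $\eps = |\log h|^{-1}$ \emph{before} squaring, which produces the exponent $2(1+\kappa)$ on $|\log h|$. No new machinery is needed beyond what has already been set up; the main thing to be careful about is applying the regularity theorem to the dual problem with data in $L^2(\Omega)$ rather than $H^{-s}(\Omega)$ and tracking the logarithmic factors through the two instances of \eqref{eq:global_interpolation_1/2}.
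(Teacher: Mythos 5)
Your proof is correct and follows essentially the same Aubin--Nitsche duality argument as the paper: you define the same dual problem (the paper's $\phi$ is your $w$), invoke Galerkin orthogonality and Cauchy--Schwarz, apply Theorem \ref{T:Besov_regularity} with $r=0$ together with the interpolation estimates \eqref{eq:global_interpolation}/\eqref{eq:global_interpolation_1/2} to both factors, and fix $\eps = |\log h|^{-1}$ to absorb the $\eps^{-(1+\kappa)}$ and $h^{-(1+\kappa)\eps}$ losses. The only cosmetic difference is that you bound the second factor directly by the already-optimized estimate of Theorem \ref{T:conv_linear} rather than carrying $\eps$ through both factors and optimizing at the end, but the two bookkeepings yield the same rate.
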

\begin{proof}
Let $e = u - u_h$ be the error, and let $\phi$ be the solution to \eqref{eq:weak_linear} with $e$ instead of the right hand side $f$. Then, the Galerkin orthogonality \eqref{eq:orthogonality} and the Cauchy-Schwarz inequality yield
\[
\| e \|_{L^2(\Omega)}^2 = \as{\phi}{e} = \as{\phi - \Pi_h \phi}{e} \le \|\phi - \Pi_h \phi\|_{\tHs} \|e\|_{\tHs} ,
\]
where $\Pi_h$ is a quasi-interpolation operator satisfying \eqref{eq:global_interpolation} if $s\ne \frac{1}{2}$ or \eqref{eq:global_interpolation_1/2} if $s=\frac{1}{2}$. Combining these estimates with \eqref{eq:regularity}, we deduce for $\eps > 0$ sufficiently small
\begin{equation} \label{eq:approx_phi}
\|\phi - \Pi_h \phi\|_{\tHs} \lesssim
\begin{cases}
  \frac{h^{\theta - \eps}}{\eps^{\xi}} \| e \|_{L^2(\Omega)} &\quad s \ne \frac12
  \\
  \frac{h^{\theta - 2\eps}}{\eps^{1+\xi}} \| e \|_{L^2(\Omega)} &\quad s = \frac12,
\end{cases}
\end{equation}
where $\theta = \min \{ s - \eps, 1/2 \}$, precisely as with \eqref{eq:energy-est} and \eqref{eq:energy-est_1/2}. The latter, together with \eqref{eq:approx_phi}, imply
\[
\| e \|_{L^2(\Omega)} \lesssim
\begin{cases}
  \frac{h^{2(\theta - \eps)}}{\eps^{2\xi}} \| f \|_{L^2(\Omega)} &\quad s \ne \frac12
  \\
  \frac{h^{2(\theta - 2\eps)}}{\eps^{2(1+\xi)}} \| f \|_{L^2(\Omega)} &\quad s = \frac12.
\end{cases}  
\]
Finally, taking $\eps = |\log h|^{-1}$ gives rise to \eqref{eq:AN}.
\end{proof}

In Proposition \ref{prop:Aubin-Nitsche}, the assumption $f\in L^2(\Om)$ is made in order to apply Theorem \ref{T:Besov_regularity} (Besov regularity on Lipschitz domains). Stronger estimates are valid provided $\Omega$ is smooth.

\begin{lemma}[further regularity]\label{L:further-reg}
Let $\pp\Om \in C^\infty$ and $f \in H^r(\Om)$ for some $r \ge -s$. If $\gamma = \min \{ s + r, \frac{1}{2} \}$, $\alpha = \min \{ s , \frac{1}{2} \}$ and $\kappa = 1$ if $s \ne \frac{1}{2}$, $\kappa = 2$ if $s = \frac12$, then there holds
\begin{equation}\label{eq:further-reg} \begin{split}
& \| u - u_h \|_{\tHs} \le C h^{\gamma} | \log h|^{\kappa} \| f \|_{H^r(\Om)}, \\
& \| u - u_h \|_{L^2(\Om)} \le C h^{\alpha + \gamma} | \log h|^{2\kappa} \| f \|_{H^r(\Om)}. 
\end{split} \end{equation}
\end{lemma}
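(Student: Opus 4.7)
The plan is to combine smooth-domain regularity estimates with the global interpolation bounds and the duality argument from Proposition \ref{prop:Aubin-Nitsche}. First, I would establish the Sobolev estimate $\|u\|_{H^{s+\gamma-\eps}(\Omega)} \le \frac{C}{\eps}\|f\|_{H^r(\Omega)}$ for all sufficiently small $\eps>0$. For $-s < r \le 0$, this is exactly \eqref{eq:regularity}, since in that range $\min\{s+r,1/2\}$ coincides with the $\gamma$ of Theorem \ref{T:Besov_regularity}. For $r > 0$ with $s \ge 1/2$, one has $\gamma = 1/2 = \alpha$, and the bound follows from the embedding $H^r(\Omega)\hookrightarrow L^2(\Omega)$ together with Theorem \ref{T:Besov_regularity} applied with right-hand side regularity index $0$. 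The delicate case is $s < 1/2$ with $r > 0$, where Theorem \ref{T:Besov_regularity} alone only yields regularity exponent $s$; here I would invoke the smooth-domain estimate \eqref{eq:vishik} of Vishik-Eskin with $r' = \min\{r, 1/2 - s - \eps\}$, which delivers $u \in \widetilde H^{s+\gamma-\eps}(\Omega)$ with the desired $\eps^{-1}$ blow-up in the constant.

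Second, I would replicate the argument used in Theorem \ref{T:conv_linear}: the best approximation property \eqref{eq:best_approximation_linear} together with the global interpolation estimate \eqref{eq:global_interpolation} (or \eqref{eq:global_interpolation_1/2} when $s=1/2$) applied with $t = s+\gamma-\eps$ yields $\|u-u_h\|_{\tHs} \le C\eps^{-(1+\kappa)} h^{\gamma-(1+\kappa)\eps}\|f\|_{H^r(\Omega)}$. Optimizing by taking $\eps = |\log h|^{-1}$ produces the first estimate in \eqref{eq:further-reg}.

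Third, for the $L^2$-bound I would follow the duality scheme of Proposition \ref{prop:Aubin-Nitsche}. Let $e = u - u_h$ and let $\phi \in \tHs$ solve \eqref{eq:weak_linear} with right-hand side $e \in L^2(\Omega)$. Galerkin orthogonality \eqref{eq:orthogonality} together with Cauchy-Schwarz gives $\|e\|_{L^2(\Omega)}^2 = (\phi-\Pi_h\phi, e)_s \le \|\phi-\Pi_h\phi\|_{\tHs}\,\|e\|_{\tHs}$. Since the data of the dual problem are in $L^2(\Omega)$, estimate \eqref{eq:approx_phi} controls the first factor by $C\eps^{-(1+\kappa)}h^{\alpha-(1+\kappa)\eps}\|e\|_{L^2(\Omega)}$. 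Combining this with the energy bound from the previous step and again taking $\eps = |\log h|^{-1}$ yields the second estimate in \eqref{eq:further-reg}.

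The main obstacle is the regularity step for $s < 1/2$ and $r > 0$, since Theorem \ref{T:Besov_regularity} alone does not reach the sharp exponent $\gamma = \min\{s+r, 1/2\}$; this is the point at which the smoothness hypothesis $\partial\Omega \in C^\infty$ is genuinely needed, through \eqref{eq:vishik}. One should also verify that the constant in \eqref{eq:vishik} has an at-worst $\eps^{-1}$ blow-up as $r' \uparrow 1/2 - s$; granted that, the remainder of the argument is a careful bookkeeping of $\eps$ powers that parallels verbatim the proofs of Theorem \ref{T:conv_linear} and Proposition \ref{prop:Aubin-Nitsche}.
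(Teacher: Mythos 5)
Your strategy matches the paper's: establish the smooth-domain regularity estimate $\|u\|_{H^{s+\gamma-\eps}(\Omega)} \lesssim \eps^{-1}\|f\|_{H^r(\Omega)}$ and then rerun the proofs of Theorem \ref{T:conv_linear} and Proposition \ref{prop:Aubin-Nitsche} with the improved exponent; the paper obtains the regularity in one stroke from Grubb's Theorem~7.1 (which, as it notes, coincides with \eqref{eq:vishik} when $s<1/2$), while you assemble the same estimate case-by-case from Theorem \ref{T:Besov_regularity} and \eqref{eq:vishik}. Your caveat about the $\eps^{-1}$ growth of the constant in \eqref{eq:vishik} as the Sobolev index approaches $1/2-s$ is precisely the quantitative ingredient the paper's qualitative statement of \eqref{eq:vishik} leaves implicit and which the paper buries in the citation of Grubb.
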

\begin{proof}
Use the regularity result from \cite[Theorem 7.1]{Grubb} (which coincides with \eqref{eq:vishik} if $s<\frac{1}{2}$) in the proofs of Theorem \ref{T:conv_linear} and Proposition \ref{prop:Aubin-Nitsche}.
\end{proof}

As discussed in Sections \ref{sec:regularity} and \ref{sec:energy-norm_estimates}, we obtain a finer characterization of the boundary behavior of solutions by using weighted spaces, and we can take advantage of this by constructing suitably graded meshes. In such a case, the same standard argument as above, but using \eqref{eq:energy-weight} instead of \eqref{eq:energy-est}, leads to the following estimate.

\begin{proposition}[convergence rates in $L^2(\Omega)$ for graded meshes]\label{prop: global_L2_graded}
Let $\Omega \subset \R^d$ be a bounded Lipschitz domain satisfying an exterior ball condition, $f \in C^{\beta}(\overline{\Omega})$ and the family $\{\Th\}$ satisfy \eqref{eq:H}, where $\beta$ and $\mu$ are taken according to \eqref{eq:beta_mu}. Then, there exists a constant $C=C(\Omega,s,\sigma)$ such that
\begin{equation} \label{eq:conv_L2_graded}
\| u - u_h \|_{L^2(\Omega)} \le C 
\left\lbrace
\begin{array}{rl}
h^{2 - s + \alpha} |\log h|^{\kappa-1} \|f\|_{C^{\beta}(\overline{\Omega})} & \mbox{if } d = 1, \\
h^{\frac{d}{2(d-1)} + \alpha} |\log h|^{\kappa} \|f\|_{C^{\beta}(\overline{\Omega})} & \mbox{if } d \ge 2,
\end{array} \right.
\end{equation}
where $\alpha = \min \{s, \frac{1}{2} \}$, $\kappa = \xi+1$ if $s \ne \frac{1}{2}$, $\kappa = \xi + 2$ if $s = \frac12$, and $\xi$ is the constant in \eqref{eq:regularity}. In terms of the number of degrees of freedom $N$, the estimate \eqref{eq:conv_L2_graded} reads
\begin{equation*} \label{eq:conv_L2_graded_N}
\|u - u_h \|_{L^2(\Omega)}  \le C  
\left\lbrace
\begin{array}{rl}
N^{-(2-s+\alpha)} (\log N)^{\kappa-1} \|f\|_{C^{\beta}(\overline{\Omega})} & \! \mbox{if } d = 1, \\
N^{-\frac{\alpha}{d} - \frac{1}{2(d-1)}} (\log N)^{\frac{\alpha}{d} + \frac{1}{2(d-1)} + \kappa} \|f\|_{C^{\beta}(\overline{\Omega})} & \!  \mbox{if } d \ge 2.
\end{array} \right.
\end{equation*}
\end{proposition}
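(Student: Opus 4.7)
The plan is to follow the Aubin--Nitsche duality argument used in Proposition \ref{prop:Aubin-Nitsche}, with the only modification being that the energy-norm error $\|u-u_h\|_{\widetilde H^s(\Omega)}$ is now estimated via the graded-mesh bound \eqref{eq:energy-weight} rather than the quasi-uniform bound \eqref{eq:energy-est}. The key observation is that the dual problem has right-hand side $e:=u-u_h\in L^2(\Omega)$ only, so for the dual solution $\phi$ we cannot invoke the weighted regularity of Theorem \ref{T:weighted_regularity}; instead we use the Lipschitz regularity estimate \eqref{eq:regularity}, which produces a factor of order $h^{\alpha-(1+\kappa)\eps}/\eps^{1+\kappa}$ precisely as in \eqref{eq:approx_phi}. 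It is only the primal error $\|u-u_h\|_{\widetilde H^s(\Omega)}$ that benefits from grading.

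Concretely, set $e=u-u_h$ and let $\phi\in\widetilde H^s(\Omega)$ solve $(\phi,v)_s=(e,v)_{L^2(\Omega)}$ for all $v\in\widetilde H^s(\Omega)$. Galerkin orthogonality \eqref{eq:orthogonality} and Cauchy--Schwarz yield
\[
\|e\|_{L^2(\Omega)}^2 = (\phi,e)_s = (\phi-\Pi_h\phi,e)_s \le \|\phi-\Pi_h\phi\|_{\widetilde H^s(\Omega)}\,\|e\|_{\widetilde H^s(\Omega)}.
\]
For the first factor I would apply \eqref{eq:approx_phi} verbatim, obtaining
\[
\|\phi-\Pi_h\phi\|_{\widetilde H^s(\Omega)} \lesssim \frac{h^{\alpha-(1+\kappa)\eps}}{\eps^{1+\kappa}}\,\|e\|_{L^2(\Omega)}.
\]
For the second factor I use \eqref{eq:energy-weight} with $t=2-\eps$, $\gamma=2-s$ when $d=1$, and $t=s+\tfrac{d}{2(d-1)}-\eps d$, $\gamma=\tfrac{1}{2(d-1)}-\eps$ when $d\ge 2$, combined with Theorem \ref{T:weighted_regularity}. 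This gives $\|e\|_{\widetilde H^s(\Omega)}\lesssim h^{2-s-\eps}\|f\|_{C^\beta(\overline\Omega)}$ for $d=1$ and $\|e\|_{\widetilde H^s(\Omega)}\lesssim \eps^{-1}h^{\frac{d}{2(d-1)}-\eps d}\|f\|_{C^\beta(\overline\Omega)}$ for $d\ge 2$ (with an extra factor in $\eps$ when $s=1/2$ accounting for $\kappa$).

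Multiplying, cancelling one copy of $\|e\|_{L^2(\Omega)}$, and finally choosing $\eps=|\log h|^{-1}$ so that $h^{-\eps}\lesssim 1$ and the $\eps^{-p}$ factors become $|\log h|^p$, produces \eqref{eq:conv_L2_graded} with precisely the stated logarithmic exponents: the factors $|\log h|^{1+\kappa}$ arising from the dual interpolation combine with $|\log h|^{\kappa}$ (in $d=1$) or $|\log h|^{1+\kappa}$ (in $d\ge 2$) from the graded energy estimate. The estimate \eqref{eq:conv_L2_graded_N} in terms of $N$ then follows by inverting \eqref{eq:dofs} with $\mu$ chosen as in \eqref{eq:beta_mu}, noting that $\mu=d/(d-1)$ is the critical value at which $h\simeq N^{-1/d}$ up to logarithms; the extra $(\log N)^{\alpha/d+1/(2(d-1))}$ factor accounts for propagating the logarithmic factor in $h\simeq N^{-1/d}|\log N|^{1/d}$ through the exponent $\alpha+\tfrac{d}{2(d-1)}$.

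The only delicate bookkeeping is the case $s=1/2$, where $\kappa=1$ and one must use \eqref{eq:global_interpolation_1/2} instead of \eqref{eq:global_interpolation} and the second branch of \eqref{eq:global_weighted_interpolation}, each contributing an extra factor $\eps^{-1}$ and an extra loss $h^{-\eps}$. I expect the calculation to be straightforward but attention-intensive: one has to verify that the combined power of $\eps^{-1}$ after multiplying the dual interpolation estimate by the primal graded-mesh energy estimate equals $1+2\kappa$ in $d=1$ and $2(1+\kappa)$ in $d\ge 2$, matching the exponents stated in \eqref{eq:conv_L2_graded}. This logarithm tracking is the main, and essentially the only, obstacle; there is no new structural ingredient beyond what is already available in Proposition \ref{prop:Aubin-Nitsche} and the graded-mesh analysis of Theorem \ref{T:conv_linear}.
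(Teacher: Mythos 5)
Your proposal is correct and follows exactly the route the paper intends: an Aubin--Nitsche duality argument in which the dual problem is treated with the (unweighted) Lipschitz regularity bound \eqref{eq:regularity} and the primal error is estimated via the graded-mesh energy bound \eqref{eq:energy-weight}, after which $\eps=|\log h|^{-1}$ and \eqref{eq:dofs} deliver \eqref{eq:conv_L2_graded}--\eqref{eq:conv_L2_graded_N}. Your remark that the dual right-hand side $e\in L^2(\Omega)$ precludes invoking Theorem \ref{T:weighted_regularity} for $\phi$, so grading only improves the primal factor, is precisely the key observation and matches the paper's proof.
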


\begin{remark}[sharpness of the $L^2$-estimates]
Combining Galerkin orthogonality \eqref{eq:orthogonality} with \eqref{eq:weak_linear}, and applying the Cauchy-Schwarz inequality, we immediately obtain
\[
\|u-u_h\|_{\tHs}^2 = \as{u-u_h}{u} = \left(u-u_h,f\right)_0 \le \|u-u_h\|_{L^2(\Omega)} \|f\|_{L^2(\Omega)},
\]
from which we deduce that 
\begin{equation} \label{eq:Schatz}
\| u-u_h \|_{L^2(\Omega)} \ge \frac{\|u-u_h\|_{\tHs}^2}{\| f\|_{L^2(\Omega)}}.
\end{equation}
If we knew that the error bound \eqref{eq:conv_Hs} were sharp in the sense that $\| u-u_h \|_{\tHs} \simeq h^\alpha |\log h|^{\kappa} \| f\|_{L^2(\Omega)}$, a reasonable assumption in practice  unless $u \in \mathbb{V}_h$ \cite{lin2014lower}, then we would obtain from \eqref{eq:AN} and \eqref{eq:Schatz}
\begin{equation} \label{eq:pollution_uniform}
\| u-u_h \|_{L^2(\Omega)} \simeq h^{2\alpha} |\log h|^{2\kappa} \| f\|_{L^2(\Omega)}.
\end{equation}

We point out that a similar consideration cannot be made if we inspect weighted estimates. Indeed, let us assume $d\ge 2$ and meshes are graded with parameter $\mu = \frac{d}{d-1}$; similar considerations are valid if the meshes are graded differently. If \eqref{eq:conv_Hs_graded} were sharp, then we could only deduce (up to logarithmic factors)
\begin{equation*} \label{eq:pollution_graded}
h^{\frac{d}{(d-1)}}  \frac{\| f\|_{C^\beta(\overline\Omega)}^2}{\| f\|_{L^2(\Omega)}} \lesssim \| u-u_h \|_{L^2(\Omega)} \lesssim h^{\frac{d}{2(d-1)} + \alpha} \| f\|_{C^\beta(\overline\Omega)},
\end{equation*}
and $\alpha = \min \{ s, \frac{1}{2} \} < \frac{d}{2(d-1)}$.
The issue here is that Theorem \ref{T:weighted_regularity} (weighted Sobolev estimate) does not yield a regularity estimate in terms of $L^2$-norms of the data. Therefore, we still need to use \eqref{eq:approx_phi} which, in turn, is based on the unweighted estimate \eqref{eq:regularity}, a consequence of Theorem \ref{T:Besov_regularity} (Besov regularity on Lipschitz domains).
\end{remark}

\section{Caccioppoli estimate}\label{sec: Caccioppoli}
The following result is well-known for usual harmonic functions. For the fractional Laplacian \eqref{eq:def_of_laps} it can be found, for example, in \cite{CozziM_2017} (see also \cite{Brasco:16,DiCastro:16,Kuusi:15}). We present a proof below, because for our purposes it is crucial to trace the dependence of the constants on the radius $R$ and the exact form of the global term. Moreover, it turns out that the technique of proof will be instrumental in Section \ref{sec:local_estimates}.

\begin{lemma}[Caccioppoli estimate]\label{L:Caccioppoli}
Let $B_{R}$ denote a ball of radius $R$ centered at $x_0\in\Om$.
If $u \in H^s(\R^d)$ is a function satisfying $\int_{B_R^c}\frac{|u(x)|}{|x-x_0|^{d+2s}}dx < \infty$ and $\as{u}{v}=0$ for all $v\in H^s(\R^d)$ supported in $B_R$, then there exists a constant $C$ independent of $R$ such that
\begin{equation} \label{eq:Caccioppoli}
|u|^2_{H^s(B_{R/2})}\le \frac{C}{R^{2s}}\|u\|_{L^2(B_{R})}^2+CR^{d+2s}\left(\int_{B_R^c}\frac{|u(x)|}{|x-x_0|^{d+2s}}dx\right)^2.
\end{equation}
\end{lemma}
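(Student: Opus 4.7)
The plan is to mimic the classical Caccioppoli strategy: test the equation against a cutoff square times $u$, and use a polarization-type identity to move $u$ out of one factor. Concretely, I fix a smooth cutoff $\eta \in C^\infty_c(\R^d)$ with $0\le\eta\le 1$, $\eta \equiv 1$ on $B_{R/2}$, $\supp \eta \subset B_{3R/4}$, and $|\na\eta|\le C/R$, and I take $v = \eta^2 u$ as test function. Because $v$ is supported in $B_R$ and the tail condition on $u$ guarantees that the form $\as{u}{\eta^2 u}$ is absolutely convergent, the hypothesis yields $\as{u}{\eta^2 u}=0$.

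The algebraic heart of the argument is the pointwise identity
\[
(a-b)(A^2 a - B^2 b) = (Aa - Bb)^2 - (A-B)^2 ab,
\]
applied with $a=u(x)$, $b=u(y)$, $A=\eta(x)$, $B=\eta(y)$ inside the double integral defining $\as{u}{\eta^2 u}$. This turns the vanishing of the form into the identity
\[
|\eta u|_{H^s(\R^d)}^2 = \frac{C_{d,s}}{2} \iint_{\R^d\times\R^d}
\frac{(\eta(x)-\eta(y))^2 \, u(x) u(y)}{|x-y|^{d+2s}} \, dy \, dx,
\]
and since $\eta\equiv 1$ on $B_{R/2}$ the left-hand side dominates $|u|_{H^s(B_{R/2})}^2$. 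Everything then reduces to bounding the right-hand side.

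I split the integration domain into the local part $B_R\times B_R$, the mixed part $B_{3R/4}\times B_R^c$ (together with its symmetric counterpart), and $B_R^c\times B_R^c$ where the integrand is zero. On the local part I use the Lipschitz estimate $(\eta(x)-\eta(y))^2\le C|x-y|^2/R^2$, the symmetrization $|u(x)u(y)|\le\frac12(u(x)^2+u(y)^2)$, and the elementary bound $\int_{B_R}|x-y|^{2-d-2s}\,dy \lesssim R^{2-2s}$ to obtain a contribution of order $R^{-2s}\|u\|_{L^2(B_R)}^2$. On the mixed part $\eta(y)=0$, so the numerator is $\eta(x)^2 u(x) u(y)$, and for $x\in B_{3R/4}$ and $y\in B_R^c$ the triangle inequality gives $|x-y|\ge |y-x_0|-3R/4 \ge |y-x_0|/4$, allowing me to replace $|x-y|^{-(d+2s)}$ by $C\,|y-x_0|^{-(d+2s)}$. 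Factoring the integrals, applying Cauchy--Schwarz in $x$ to get $\int \eta^2|u|\lesssim R^{d/2}\|u\|_{L^2(B_R)}$, and using a weighted Young inequality $PQ\le \tfrac{\lambda}{2}P^2 + \tfrac{1}{2\lambda}Q^2$ with $\lambda = R^{-d-2s}$ produces precisely the two terms in \eqref{eq:Caccioppoli}.

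The delicate step is Step~3: the cutoff has to be supported \emph{strictly} inside $B_R$ so that the distance bound $|x-y|\gtrsim|y-x_0|$ is available on the mixed region (otherwise $|x-y|$ could be arbitrarily small even as $|y-x_0|\gg R$), and the weight in Young's inequality must be chosen so that a single $R^{-2s}\|u\|_{L^2(B_R)}^2$ absorbs both the local contribution and half of the tail contribution, leaving the clean $R^{d+2s}(\int_{B_R^c}|u|/|y-x_0|^{d+2s})^2$ factor. The tracking of these constants (and of the exact form of the tail) is exactly what the authors advertise as important for reuse in Section~\ref{sec:local_estimates}.
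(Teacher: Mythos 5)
Your proof is correct and follows essentially the same strategy as the paper: same cutoff $\eta$ supported in $B_{3R/4}$, same test function $\eta^2 u$, the same polarization identity $(a-b)(A^2a-B^2b)=(Aa-Bb)^2-(A-B)^2ab$, the same triangle-inequality bound $|x-y|\ge\frac14|y-x_0|$ on the mixed region, and the same Cauchy--Schwarz/Young argument to produce the two final terms. The only (harmless) cosmetic difference is that you apply the identity globally over $\R^d\times\R^d$, which absorbs the self-interaction piece $\int_{B_R}\int_{B_R^c}\eta^2(x)u(x)^2|x-y|^{-d-2s}$ into $|\eta u|^2_{H^s(\R^d)}$ rather than bounding it separately as the paper's $I_{21}$.
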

\begin{proof}
Let $\eta : \rd \to [0,1]$ be a smooth cut-off function with the following properties:
\begin{subequations} \label{eq: cut-off}
\begin{align}
\eta &\equiv 1 \quad \text{in}\ B_{R/2} \label{property of eta 1}\\
\eta &\equiv 0 \quad \text{in}\ B^c_{3R/4} \label{property of eta 2}\\
|\na\eta| &\le CR^{-1}\label{property of eta 3}.
\end{align}
\end{subequations}
Thus,
\begin{equation*}
  0=\as{u}{\eta^2 u}=\int_\rd \int_\rd \frac{(u(x)-u(y))(\eta^2(x) u(x)-\eta^2(y) u(y))}{|x-y|^{d+2s}}dydx  = I_1 + I_2,
\end{equation*}
where
\begin{align*}
I_1 &:= \int_{B_R}\int_{B_R} \frac{(u(x)-u(y))(\eta^2(x) u(x)-\eta^2(y) u(y))}{|x-y|^{d+2s}}dydx,\\
I_2 &:= 2 \int_{B_R} \int_{B_R^c}\frac{(u(x)-u(y))(\eta^2(x) u(x)-\eta^2(y) u(y))}{|x-y|^{d+2s}}dydx.
\end{align*}
Using the identity 
\begin{equation*}
(u(x)-u(y))(\eta(x)^2 u(x)-\eta(y)^2 u(y)) = [\eta(x) u(x)-\eta(y) u(y)]^2-u(x)u(y)[\eta(x)-\eta(y)]^2,
\end{equation*}
we obtain
$ I_1 =|\eta u|^2_{H^s(B_R)}-I_{11},$
where 
\[
I_{11} = \int_{B_R}\int_{B_R} \frac{u(x)u(y)[\eta(x)-\eta(y)]^2}{|x-y|^{d+2s}}dydx .
\]
In view of of \eqref{property of eta 3}, we have $|\eta(x)-\eta(y)|\le CR^{-1}|x-y|$ and, applying the Cauchy-Schwarz inequality, we deduce 
\begin{align*}
I_{11} &\le  \frac{C}{R^2}\int_{B_R}\int_{B_R} \frac{|u(x)||u(y)|}{|x-y|^{d-2+2s}}dydx
  \\
  & \le
\frac{C}{R^2}\int_{B_R}\int_{B_R} \frac{|u(x)|^2}{|x-y|^{d-2+2s}}dydx\le \frac{C}{R^{2s}}\|u\|^2_{L^2(B_R)},
\end{align*}
because the kernel $|x-y|^{-d+2-2s}$ is integrable on $\{x=y\}$ and using polar coordinates $\rho=|x-y|$ yields
\[
\int_{B_R}\frac{dy}{|x-y|^{d-2+2s}}\le c\int_0^R \rho^{1-2s}d\rho = cR^{2-2s}.
\]
Next, since $\eta$ is supported in $B_{3R/4}$, according to \eqref{property of eta 2}, and bounded by 1, we have
\begin{equation*} \begin{split}
|I_2| & \le 2 \int_{B_R} \int_{B_R^c} \frac{|u(x)-u(y)|\eta^2(x) |u(x)|}{|x-y|^{d+2s}}dydx
\\
& \le  2 \int_{B_{3R/4}} |u(x)|\int_{B^c_R}\frac{|u(x)-u(y)|}{|x-y|^{d+2s}}dydx \le I_{21} + I_{22},
\end{split} \end{equation*}
with
\begin{align*}
  I_{21} := & 2 \int_{B_{3R/4}}\left(|u(x)|^2\int_{B^c_R}\frac{dy}{|x-y|^{d+2s}}\right)dx
  \\
I_{22} := & 2\int_{B_{3R/4}}\left(|u(x)|\int_{B^c_R}\frac{|u(y)|}{|x-y|^{d+2s}}dy\right)dx.
\end{align*}
Using that $\operatorname{dist}(B_{3R/4},B^c_R)=R/4$, and integrating in polar coordinates, we deduce 
\[
\int_{B^c_R}\frac{dy}{|x-y|^{d+2s}}\le C\int_{R/4}^\infty \rho^{-1-2s}d\rho= C R^{-2s} \quad \forall x \in B_{3R/4},
\]
and as a consequence
\[
I_{21}\le \frac{C}{R^{2s}}\|u\|^2_{L^2(B_R)}.
\]
To estimate $I_{22}$, we first observe that for all $x\in B_{3R/4}$ and $y\in B_R^c$,
  we have
\[
R < |y-x_0| \le |x-x_0| + |y-x| \le \frac{3R}{4} + |y-x| \le \frac{3}{4} |y-x_0| + |y-x|
\ \Rightarrow \
\frac14 |y-x_0| \le |y-x|.
\]
Utilizing now the H\"older's inequality, in conjunction with the Young's inequality, yields
\[
\begin{aligned}
I_{22}&\le 2\|u\|_{L^1(B_R)}\sup_{x\in B_{3R/4}}\int_{B^c_R}\frac{|u(y)|}{|x-y|^{d+2s}}dy\le CR^{d/2}\|u\|_{L^2(B_R)}\int_{B^c_R}\frac{|u(y)|}{|y-x_0|^{d+2s}}dy\\
&\le \frac{C}{R^{2s}}\|u\|^2_{L^2(B_R)}+CR^{d+2s}\left(\int_{B^c_R}\frac{|u(y)|}{|y-x_0|^{d+2s}}dy\right)^2.
\end{aligned}
\]
Writing $|\eta u|^2_{H^s(B_R)} = I_{11} - I_2$, and combining the estimates above, we obtain
\[
|\eta u|^2_{H^s(B_R)} \le \frac{C}{R^{2s}}\|u\|^2_{L^2(B_R)}+CR^{d+2s}\left(\int_{B^c_R}\frac{|u(y)|}{|y-x_0|^{d+2s}}dy\right)^2.
\]
The estimate \eqref{eq:Caccioppoli} follows because
\[
|u|^2_{H^s(B_{R/2})}\le |\eta u|^2_{H^s(B_R)},
\]
due to \eqref{property of eta 1}. This concludes the proof.
\end{proof}

\section{Local energy estimates} \label{sec:local_estimates}

In this section we derive error estimates in local $H^s$-seminorms. For that purpose, we first develop a local superapproximation theory in fractional norms and afterwards combine it with the techniques used in the derivation of the Caccioppoli estimate \eqref{eq:Caccioppoli}. 

Here we consider the usual nodal interpolation operator $I_h \colon C_0(\overline{\Om})\to \mathbb{V}_h $, which satisfies for $1\le p\le \infty,\ j\le k\le 2,\ k>\frac{d}{p}$
\begin{equation} \label{eq:interp_Ih}
  |v-I_hv |_{W^{j,p}(T)}\le  Ch^{k-j} |v |_{W^{k,p}(T)}
  \quad\forall \, v\in W^{k,p}(T).
\end{equation}

\subsection{Superapproximation}
Superapproximation is an essential tool in {\it local} energy finite element error estimates \cite{NitscheJA_SchatzAH_1974a}. Below we adapt the ideas from \cite{DemlowA_GuzmanJ_SchatzAH_2011}, which lead to improved superapproximation estimates applicable to a general class of meshes. Similarly to \cite{DemlowA_GuzmanJ_SchatzAH_2011}, we require only shape-regularity.

For an arbitrary $\eta \in C^2 (\overline\Omega)$ and $v_h \in \mathbb{V}_h$, it turns out that the function
\begin{equation}\label{eq:psi}
  \psi := \eta^2v_h- I_h(\eta^2v_h)
\end{equation}      
is smaller than expected in various norms, a property called superapproximation \cite{NitscheJA_SchatzAH_1974a}. To see this, we let $T \in \Th$ be arbitrary and combine \eqref{eq:interp_Ih} with the fact that $v_h$ is linear on $T$, to obtain the following $L^p$-type superapproximation estimate for $\psi$ in \eqref{eq:psi} and any $1\ \le p\le \infty$:
\begin{equation}\label{eq: usual superapproximation}
\begin{aligned}
\|\psi\|_{L^p(T)} + h_T |\psi|_{W^{1,p}(T)} &\le Ch^2_T|\eta^2v_h|_{W^{2,p}(T)} \\
  &\le Ch^2_T\Big(\|\na \eta\|_{L^\infty(T)}\|\na(\eta v_h)\|_{L^p(T)}
  \\
  & + \big(\| \eta\|_{L^\infty(T)}\|\na^2 \eta\|_{L^\infty(T)}+ \|\na \eta\|_{L^\infty(T)}^2\big)\|v_h\|_{L^p(T)}\Big),
\end{aligned}
\end{equation}
where we used that 
\[
\begin{aligned}
\partial^2(\eta^2v_h)&=\partial^2\eta \, (\eta v_h)+2 \, \partial \eta \, \partial(\eta v_h)+\eta \, \partial^2(\eta v_h)\\
&=\partial^2\eta \, (\eta v_h)+2 \, \partial \eta \, \partial(\eta v_h)+\eta\left( \partial^2\eta \, v_h +  {2} \, \partial \eta \, \partial v_h\right)\\
&=2 \, \partial^2\eta \, (\eta v_h)+ 4 \, \partial \eta  \, \partial(\eta v_h) - 2 \, (\partial\eta)^2 \, v_h,
\end{aligned}
\]
with $\partial$ denoting any partial derivative. 
These estimates suffice for second order elliptic problems. However, for fractional problems we need to account for the fact that the $H^s$-norm is nonlocal. We embark on this endeavor now upon first examining stars $S_T$ and next interior balls
\[
B_R := B(x_0, R) \subset \Om,
\quad
h_R := \max_{T\in\Lambda_R} h_T,
\quad
\Lambda_R := \{ T \in \Th \colon T \cap B_R \neq \emptyset \}.
\]
In this setting, $\eta$ is a suitable localization function, namely $\eta\in C^\infty(\Om)$ is the cut-off function of \eqref{eq: cut-off}:
\begin{equation}\label{eq:eta}
  0 \le \eta \le 1, \quad \eta \equiv 1 \quad \text{in}\ B_{R/2}, \quad  \eta \equiv 0 \quad \text{in}\ B^c_{3R/4}, \quad |\nabla^k \eta| \le CR^{-k} \quad (k\ge 1).
\end{equation}
\begin{lemma}[superapproximation in $H^s(S_T)$]\label{L:superapprox-ST}
  Let $T\in\Th$, $0\le s \le 1$, and $\eta$ satisfy \eqref{eq:eta}. For any $v_h\in\mathbb{V}_h$  and $\psi$ given by \eqref{eq:psi}, there is a constant $C$ depending on shape-regularity of $\Th$ such that
  \begin{equation}\label{eq: superapproximation fractional12}
  |\psi|_{H^s(S_T)} \le C \frac{h_T}{R} |\eta v_h|_{H^s(S_T)} + C\frac{h_T^{2-s}}{R^2}
    \|v_h\|_{L^2(S_T)}.
  \end{equation}
\end{lemma}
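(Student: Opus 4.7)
The plan is to reduce the statement to the standard $L^2$- and $H^1$-superapproximation bounds \eqref{eq: usual superapproximation}, interpolate fractionally on the patch $S_T$, and finally apply the inverse estimate \eqref{eq: weighted inverse} in order to replace $|\eta v_h|_{H^1(S_T)}$ by $|\eta v_h|_{H^s(S_T)}$ on the right hand side.

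First, I would apply \eqref{eq: usual superapproximation} with $p=2$ on each $T' \in \Th$ with $T' \subset S_T$. Using the bounds $\|\nabla\eta\|_{L^\infty} \le CR^{-1}$ and $\|\nabla^2\eta\|_{L^\infty} \le CR^{-2}$ from \eqref{eq:eta}, together with $\|\nabla(\eta v_h)\|_{L^2(T')} = |\eta v_h|_{H^1(T')}$, this yields
$$\|\psi\|_{L^2(T')} + h_{T'} \, |\psi|_{H^1(T')} \le C h_{T'}^2 \bigl(R^{-2}\|v_h\|_{L^2(T')} + R^{-1} |\eta v_h|_{H^1(T')}\bigr).$$
Since every $T' \subset S_T$ intersects $T$, shape-regularity gives $h_{T'} \simeq h_T$; squaring and summing over all such elements yields the patch estimates
$$\|\psi\|_{L^2(S_T)} + h_T \, |\psi|_{H^1(S_T)} \le C h_T^2 \bigl(R^{-2}\|v_h\|_{L^2(S_T)} + R^{-1} |\eta v_h|_{H^1(S_T)}\bigr).$$

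Next, because $S_T$ is a Lipschitz domain of diameter comparable to $h_T$ with shape controlled by $\sigma$, scaling to a unit-size reference patch and invoking the continuous embedding $H^1 \hookrightarrow H^s$ on that patch gives the fractional interpolation inequality
$$|\psi|_{H^s(S_T)}^2 \le C \bigl(h_T^{-2s}\|\psi\|_{L^2(S_T)}^2 + h_T^{2-2s}|\psi|_{H^1(S_T)}^2\bigr),$$
which, combined with the patch estimates, collapses the powers of $h_T$ on both right-hand side terms to $h_T^{2-s}$ and produces
$$|\psi|_{H^s(S_T)} \le C h_T^{2-s}\bigl(R^{-2}\|v_h\|_{L^2(S_T)} + R^{-1} |\eta v_h|_{H^1(S_T)}\bigr).$$

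Finally, the inverse estimate \eqref{eq: weighted inverse} gives $|\eta v_h|_{H^1(S_T)} \le C h_T^{s-1}\,|\eta v_h|_{H^s(S_T)}$, and substituting this into the previous line delivers exactly \eqref{eq: superapproximation fractional12}. The main obstacle I anticipate is establishing the fractional interpolation inequality in the second step with a constant independent of $h_T$; this is handled by noting that shape-regularity renders the geometry of $S_T$ uniform after scaling, so the constant reduces to that of $H^1 \hookrightarrow H^s$ on a fixed reference patch, which depends only on $\sigma$.
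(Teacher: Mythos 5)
Your proof is correct and takes essentially the same route as the paper's: extend the elementwise superapproximation bound \eqref{eq: usual superapproximation} to the patch $S_T$, pass to the fractional seminorm (the paper invokes space interpolation theory directly, you derive the scaled bound $|\psi|_{H^s(S_T)}^2\lesssim h_T^{-2s}\|\psi\|_{L^2(S_T)}^2+h_T^{2-2s}|\psi|_{H^1(S_T)}^2$ via scaling and the embedding $H^1\hookrightarrow H^s$, which yields the same intermediate estimate), and finish with the inverse inequality \eqref{eq: weighted inverse}. The only cosmetic difference is that you fill in the element-to-patch summation and the scaling argument explicitly rather than appealing to interpolation theory in one stroke.
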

\begin{proof}
  Since the norms involved in \eqref{eq: usual superapproximation} are local, and the size of $S_T$ is proportional to $h_T$ because $\Th$ is shape-regular, we realize that \eqref{eq: usual superapproximation} is also valid in $S_T$. This leads to the desired estimate for $s=0,1$. For $s \in (0,1)$, we apply space interpolation theory to \eqref{eq: usual superapproximation} over $S_T$ to infer that
\begin{equation}\label{eq: superapproximation fractional}
  |\psi|_{H^s(S_T)} \le C\frac{h_T^{2-s}}{R} \|\na(\eta v_h)\|_{L^2(S_T)} + 
  C \frac{h_T^{2-s}}{R^2} \|v_h\|_{L^2(S_T)}.
\end{equation}
We finally resort to \eqref{eq: weighted inverse}, namely
$\|\na(\eta v_h)\|_{L^2(S_T)} \lesssim h_T^{s-1} |\eta v_h|_{H^s(S_T)}$, to finish the proof.
\end{proof}

\begin{lemma}[superapproximation in $H^s(B_R)$]\label{lemma: superapproximation}
Let $h_R$ satisfy $16 \, h_R \le R$ and let $0\le s\le 1$. For any $v_h\in \mathbb{V}_h$ and $\psi$ given in \eqref{eq:psi}, there exists a constant $C$ depending on shape-regularity of $\Th$ such that
\begin{equation}\label{eq: superapproximation with L2 only}
  |\psi|_{H^s(B_R)} \le C R^{-s} \|v_h\|_{L^2(B_R)}.
\end{equation}
\end{lemma}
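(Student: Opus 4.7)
The plan is to estimate $|\psi|_{H^s(B_R)}$ by applying the localization inequality \eqref{eq:localization} to $\psi$ and then bounding each local contribution via Lemma \ref{L:superapprox-ST} together with inverse estimates. The first step is to observe that $\psi$ is compactly supported well inside $B_R$: since $\eta^2 v_h$ vanishes outside $B_{3R/4}$ and $I_h(\eta^2 v_h)$ can only be nonzero on elements containing a node in $B_{3R/4}$, the hypothesis $16\,h_R\le R$ implies $\supp(\psi)\subset B_{13R/16}\subset B_R$. Combining this with $|\psi|_{H^s(B_R)}\le|\psi|_{H^s(\Omega)}$ and \eqref{eq:localization} yields
\[
|\psi|^2_{H^s(B_R)} \le \sum_T \int_T\int_{S_T}\frac{|\psi(x)-\psi(y)|^2}{|x-y|^{d+2s}}\,dy\,dx + \sum_T \frac{C}{s\,h_T^{2s}}\|\psi\|_{L^2(T)}^2,
\]
where only elements $T$ that meet $B_{13R/16}$ contribute, and these (together with their stars $S_T$) lie inside $B_R$.

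For the first sum I bound $\int_T\int_{S_T}\le|\psi|_{H^s(S_T)}^2$ and invoke Lemma \ref{L:superapprox-ST} to obtain $|\psi|_{H^s(S_T)}\le C(h_T/R)|\eta v_h|_{H^s(S_T)}+C(h_T^{2-s}/R^2)\|v_h\|_{L^2(S_T)}$. The central technical step is to control the cross term $|\eta v_h|_{H^s(S_T)}$: since $\eta v_h\notin\mathbb{V}_h$ the inverse estimate \eqref{eq: inverse Hs to L2} does not apply directly, so the plan is to interpolate between $L^2(S_T)$ and $H^1(S_T)$, use the product rule together with $\|\nabla\eta\|_{L^\infty}\le CR^{-1}$, and apply the standard inverse estimate $\|\nabla v_h\|_{L^2(S_T)}\le C h_T^{-1}\|v_h\|_{L^2(S_T)}$. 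Since $h_T\le R$ this yields $|\eta v_h|_{H^s(S_T)}\le C h_T^{-s}\|v_h\|_{L^2(S_T)}$, whence $|\psi|_{H^s(S_T)}\le C\,h_T^{1-s} R^{-1}\|v_h\|_{L^2(S_T)}$ after absorbing the lower order term via $h_T\le R/16$.

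For the second sum I specialize \eqref{eq: usual superapproximation} with $p=2$, bounding $\|\nabla(\eta v_h)\|_{L^2(T)}$ by the product rule and inverse estimate exactly as above, to arrive at $\|\psi\|_{L^2(T)}\le C h_T R^{-1}\|v_h\|_{L^2(T)}$. Summing both contributions over the relevant $T$ (whose stars have bounded overlap by shape regularity and lie in $B_R$) and using $h_T\le h_R\le R/16$ so that $h_T^{2(1-s)}\le CR^{2(1-s)}$ for $0\le s\le 1$, both sums are controlled by $CR^{-2s}\|v_h\|_{L^2(B_R)}^2$, completing the proof. The principal obstacle is the treatment of $|\eta v_h|_{H^s(S_T)}$ via interpolation plus an inverse estimate, and the bookkeeping needed to guarantee that the final $L^2$-norm is localized on $B_R$ rather than on $\Omega$, which hinges on the support information derived for $\psi$.
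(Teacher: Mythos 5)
Your proposal is correct and follows essentially the same route as the paper: localize $|\psi|_{H^s(B_R)}^2$ via \eqref{eq:localization}, observe that only elements well inside $B_R$ contribute (you get $\supp\psi\subset\overline{B_{13R/16}}$, the paper shows $\psi|_{S_T}\equiv 0$ for $T\notin\Lambda_{7R/8}$ -- same idea), bound each star contribution via the local superapproximation of Lemma~\ref{L:superapprox-ST} together with an inverse-type estimate, and sum using bounded overlap and $h_T\le R$. The only genuine (and minor) deviation is how you treat the cross term: you bound $|\eta v_h|_{H^s(S_T)}$ by interpolating between $L^2(S_T)$ and $H^1(S_T)$ and then using the product rule plus the standard inverse estimate on $v_h$, while the paper works from the intermediate bound \eqref{eq: superapproximation fractional} and invokes the dedicated inverse estimate \eqref{eq: weighted inverse} with $s=0$; both routes produce the same factor $h_T^{1-s}/R$, and your route is arguably a bit more self-contained since it never needs an inverse estimate on the weighted function $\eta v_h$ itself.
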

\begin{proof}
  If $s = 0, 1$, then the estimate follows immediately from \eqref{eq: usual superapproximation}, the additivity of squares of integer-order $L^2$-norms with respect to domain partitions, the inverse inequality \eqref{eq: weighted inverse} and the fact that $h_R \le R/16$.

For $s \in (0,1)$, we make use of \eqref{eq:localization} to obtain
\begin{align*}
  |\psi|^2_{H^s(B_R)} \le \sum_{T\in \Lambda_R}\left(\int_{T} \int_{S_T}\frac{|\psi(x)-\psi(y)|^2}{|x-y|^{d+2s}}dydx+\frac{C}{h_T^{2s}}\|\psi\|^2_{L^2(T)}\right).
\end{align*}
Let $T\in\Lambda_R$ and $x\in T$ be a generic point. We first point out that if
  $x\in B_{7R/8}^c$ then the vertices $y$ of $T$ satisfy
  $|x_0-y| \ge \frac{7}{8}R - \frac{1}{16}R > \frac{3}{4}R$ and $\psi(x)=0$ according to the definition
  \eqref{eq:psi}. We now let $y \in S_T$ and examine two mutually exclusive cases.
  
If $x\in B_{7R/8}$, then $y$ belongs to a triangle in $\Lambda_R$ because the vertices of
$T$ are at distance $\frac{7}{8}R + \frac{1}{16}R < R$ from $x_0$, whence $|x-y|\le 2h_R\le \frac{1}{8}R$. Therefore
\[
  |x-x_0| \le \frac{7}{8}R \quad\Rightarrow\quad
  |y-x_0| \le |x-x_0| + |y-x| \le R \quad\Rightarrow\quad S_T \subset B_R.
\]
On the other hand, if $x\in B_{7R/8}^c$ and $y \in T'\in\Lambda_R$,
  then $|x-y|\le 2h_R\le \frac{1}{8}R$ and
\[
  |x-x_0| \ge \frac{7}{8}R \quad\Rightarrow\quad
  |y-x_0| \ge |x-x_0| - |y-x| \ge \frac{3}{4}R .
\]
Since $y$ is allowed to be any element vertex on $S_T$, the latter implies that
\begin{equation}\label{eq:7R/8}
\psi\big|_{S_T} \equiv 0 \quad \forall T \in \Lambda_R \setminus \Lambda_{7R/8}.
\end{equation}
We thus realize that the only $T$'s that matter in the sum above are those $T\in \Lambda_{7R/8}$
\begin{align*}
|\psi|^2_{H^s(B_R)} \le \sum_{T\in \Lambda_{7R/8}}\left(|\psi|^2_{H^s(S_T)}+\frac{C}{h_T^{2s}}\|\psi\|^2_{L^2(T)}\right).
\end{align*}
To estimate each term on the right-hand side we exploit the property that $S_T \subset B_R$ for all $T\in\Lambda_{7R/8}$. For the first term, we also employ \eqref{eq: superapproximation fractional}, together with \eqref{eq: weighted inverse} with $s=0$ and \eqref{eq:eta}. For the second term we resort to \eqref{eq: usual superapproximation} for $p=2$ together with \eqref{eq: weighted inverse} for $s=0$. In both cases, we get
\begin{equation*} \begin{split}
\sum_{T\in \Lambda_{7R/8}} \left( |\psi|^2_{H^s(S_T)} +\frac{C}{h_T^{2s}}\|\psi\|^2_{L^2(T)}\right) & \le C \sum_{T\in \Lambda_{7R/8}}  \Big( \frac{h_T^{2-2s}}{R^2}
+ \frac{h_T^{4-2s}}{R^4} \Big) \|v_h\|_{L^2(S_T)}^2 \\
& \le \frac{C}{R^{2s}} \|v_h\|_{L^2(B_R)}^2,
\end{split} \end{equation*}
because $h_T\le \frac{1}{16}R$. The desired estimate follows immediately.
\end{proof}

The proof of Lemma \ref{lemma: superapproximation} (superapproximation in $H^s(B_R)$)
reveals that
\begin{equation}\label{eq:psi=0}
\psi\big|_{B_{7R/8}^c} = 0.
\end{equation}    

\subsection{Local Energy Estimates}
Recall that  the finite element solution to \eqref{eq:weak_linear} satisfies \eqref{eq:weak_linear_discrete}, which gives the Galerkin orthogonality relation \eqref{eq:orthogonality}. In order to localize such relation, given a subdomain $D\subset \Om$, we define $\mathbb{V}_h(D)=\mathbb{V}_h\cap H^1_0(D)$ as the space of continuous piecewise linear functions restricted to $D$ that vanish on $\partial D$. We will derive error estimates for a function $\tuh \in \mathbb{V}_h$ that satisfies the {\it local Galerkin orthogonality} relation
\begin{equation}\label{eq: local Galerkin orthogonality}
 \as{u-\tuh}{v_h}=0,\quad \forall v_h \in \mathbb{V}_h(B_{R}).
\end{equation}

\begin{theorem}[local energy error estimate]\label{thm:local_energy}
Let $u\in\tHs$ and $\tuh \in \mathbb{V}_h$ satisfy \eqref{eq: local Galerkin orthogonality}. If $16 \, h_R \le R$, then there exists a constant $C$ depending on shape regularity such that for any $v_h\in \mathbb{V}_h$,
\[
\begin{aligned}
|u-\tuh|^2_{H^s(B_{R/2})}\le & C |u-v_h|^2_{H^s(B_R)}+\frac{C}{R^{2s}}\|u-v_h\|_{L^2(B_{R})}^2\\
& +CR^{d+2s}\left(\int_{B_R^c}\frac{|u(x)-v_h(x)|}{|x-x_0|^{d+2s}}dx\right)^2
\\ &+\frac{C}{R^{2s}}\|u-\tuh\|_{L^2(B_{R})}^2
+CR^{d+2s}\left(\int_{B_R^c}\frac{|u(x)-\tuh(x)|}{|x-x_0|^{d+2s}}dx\right)^2.
\end{aligned}
\]
\end{theorem}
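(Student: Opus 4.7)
My plan is to mimic the continuous Caccioppoli argument of Lemma \ref{L:Caccioppoli}, substituting the non-available test function $\eta^2 e$ with $e := u - \tuh$ by a discrete surrogate and controlling the ensuing superapproximation error through Lemma \ref{lemma: superapproximation}. Let $\phi := u - v_h$, $\chi_h := \tuh - v_h = \phi - e \in \mathbb{V}_h$, and take $\eta$ as in \eqref{eq:eta}. Set $\xi_h := I_h(\eta^2\chi_h)$; because $\eta^2\chi_h$ is supported in $B_{3R/4}$ and $I_h$ enlarges supports by at most $h_R\le R/16$, we have $\xi_h\in \mathbb{V}_h(B_R)$, so the local Galerkin orthogonality \eqref{eq: local Galerkin orthogonality} gives $(e,\xi_h)_s = 0$. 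Writing $\eta^2\chi_h = \eta^2\phi - \eta^2 e$ produces the key identity
\[
(e,\eta^2 e)_s = (e,\eta^2\phi)_s - (e,\psi)_s, \qquad \psi := \eta^2\chi_h - \xi_h.
\]
Identity \eqref{eq:aux_identity} rewrites the left-hand side as $|\eta e|^2_{H^s(\R^d)} - J$ with $J$ as in the proof of Lemma \ref{L:Caccioppoli}; since $\eta\equiv 1$ on $B_{R/2}$ we have $|e|^2_{H^s(B_{R/2})} \le |\eta e|^2_{H^s(\R^d)}$, so it suffices to bound $|\eta e|^2_{H^s(\R^d)} = J + (e,\eta^2\phi)_s - (e,\psi)_s$.

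I would bound $J$ verbatim as in the proof of Lemma \ref{L:Caccioppoli}, obtaining $|J|\le CR^{-2s}\|e\|^2_{L^2(B_R)} + CR^{d+2s}\mathrm{tail}(e)^2$. For $(e,\eta^2\phi)_s$, the expansion
\[
(e(x)-e(y))(\eta^2\phi(x)-\eta^2\phi(y)) = (\eta e(x)-\eta e(y))(\eta\phi(x)-\eta\phi(y)) + (\eta(x)-\eta(y))\bigl[\eta(y)e(x)\phi(y)-\eta(x)e(y)\phi(x)\bigr]
\]
splits the bilinear form into $(\eta e,\eta\phi)_s$ plus a corrector. Cauchy--Schwarz and Young give $\varepsilon |\eta e|^2_{H^s(\R^d)} + C_\varepsilon |\eta\phi|^2_{H^s(\R^d)}$ for the first piece, and a Caccioppoli-style bound applied to $\phi$ controls $|\eta\phi|^2_{H^s(\R^d)}$ by $C|\phi|^2_{H^s(B_R)} + CR^{-2s}\|\phi\|^2_{L^2(B_R)} + CR^{d+2s}\mathrm{tail}(\phi)^2$; the corrector has the same structure as $J$ but involves mixed $e$-$\phi$ products absorbable via Young. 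For $(e,\psi)_s$, I exploit $\supp(\psi)\subset B_{7R/8}$ to split the integral into a $B_R\times B_R$ near-field and a $B_R\times B_R^c$ far-field. Cauchy--Schwarz on the near-field combined with Lemma \ref{lemma: superapproximation} ($|\psi|_{H^s(B_R)}\le CR^{-s}\|\chi_h\|_{L^2(B_R)}$) yields $\varepsilon|e|^2_{H^s(B_R)} + C_\varepsilon R^{-2s}\|\chi_h\|^2_{L^2(B_R)}$, while the far-field reproduces the $e$-tail squared via $|y-x|\ge |y-x_0|/8$ as in the proof of Lemma \ref{L:Caccioppoli}. Using $\|\chi_h\|_{L^2(B_R)}\le \|e\|_{L^2(B_R)}+\|\phi\|_{L^2(B_R)}$ then finishes the bound on $(e,\psi)_s$.

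Summing the three bounds and absorbing the $\varepsilon|\eta e|^2_{H^s(\R^d)}$ contributions into the left-hand side yields an inequality of the form
\[
|e|^2_{H^s(B_{R/2})} \le C\varepsilon\,|e|^2_{H^s(B_R)} + C\,\bigl(\text{RHS of the theorem}\bigr).
\]
The residual $C\varepsilon|e|^2_{H^s(B_R)}$, living on a strictly larger set than the left-hand side, is the main obstacle: it arises from the Cauchy--Schwarz step applied to the $B_R\times B_R$ portion of $(e,\psi)_s$, where the nonlocality of the fractional inner product defeats the sharper localization that is available in the classical Caccioppoli setting. I would close the argument via a Schatz-type iteration: establish the analogous bound uniformly for concentric balls $B_r$, $r\in[R/2,R]$, with cutoffs of appropriate widths, and chain the estimates along a geometric sequence $r_k = R/2 + R(1-2^{-k})/2$. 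Shape regularity keeps the constants in Lemmas \ref{L:Caccioppoli}--\ref{lemma: superapproximation} uniform along the iteration, and for $\varepsilon$ small enough the geometric series converges to the stated bound.
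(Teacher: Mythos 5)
Your setup is sound: $\xi_h=I_h(\eta^2\chi_h)\in\mathbb{V}_h(B_R)$ is correct under $16h_R\le R$, the identity $(e,\eta^2 e)_s=(e,\eta^2\phi)_s-(e,\psi)_s$ follows, and the handling of $J$, the far-field of $(e,\psi)_s$, and $(e,\eta^2\phi)_s$ parallels the paper's treatment of $I_1,I_2,I_{42}$ and $I_3$. The difficulty you correctly identify — the near-field Cauchy–Schwarz on $(e,\psi)_s$ leaving $\varepsilon|e|^2_{H^s(B_R)}$ on a strictly larger ball — is, however, precisely what the paper's proof is engineered to avoid, and your proposed Schatz-type iteration has a genuine gap that you do not resolve.

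The iteration has two problems. First, each step uses a cutoff of transition width $\delta_j=r_{j+1}-r_j\sim R\,2^{-j}$, so the superapproximation lemma requires $16h_{r_{j+1}}\le\delta_j$; this forces termination at $K\sim\log_2(R/h_R)$, and the residual $\varepsilon^K|e|^2_{H^s(B_{r_K})}$ must be controlled. The only tool available is an inverse estimate on the discrete part $\chi_h$, and on a general shape-regular mesh that estimate produces a factor $h_{\min}^{-2s}$ (over elements in $\Lambda_R$) while $\varepsilon^K\sim(h_R/R)^\alpha$ involves $h_R=h_{\max}$; shape-regularity alone does not bound $h_{\max}/h_{\min}$ locally, so the residual need not be absorbable. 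Second, the accumulated corrector terms come with $\delta_j^{-2s}\sim R^{-2s}2^{2sj}$, so one must have $\varepsilon<2^{-2s}$ for the geometric series to converge; this interacts with the first issue and is not analyzed.

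The paper sidesteps the iteration entirely by choosing $\psi=\eta^2\tuh-I_h(\eta^2\tuh)$ (not $\eta^2\chi_h$), so that the left-hand side after the Caccioppoli expansion is $|\eta\tuh|^2_{H^s(B_R)}$ and \emph{every} Young-inequality absorption produces $\varepsilon|\eta\tuh|^2_{H^s(B_R)}$, which matches the left-hand side exactly and is kicked back with a single fixed $\varepsilon$. The price is the fully discrete term $I_5=(\tuh,\psi)_s$, which a naive Cauchy–Schwarz would again produce $\varepsilon|\tuh|^2_{H^s(B_R)}$ (unabsorbable); this is handled by the paper's genuinely new device — Faermann's localization and symmetry lemma plus the Lipschitz mesh function $h(\cdot)$ from \eqref{eq: mesh function} — explicitly called ``the most delicate and innovative part of the proof.'' Your proposal does not reach this term and hence does not engage the mechanism by which the result holds on merely shape-regular (not locally quasi-uniform) meshes. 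A hybrid repair exists within your framework: split $(e,\psi)_s=(\phi,\psi)_s-(\chi_h,\psi)_s$ and apply the paper's refined treatment to the all-discrete piece $(\chi_h,\psi)_s$; that removes the need for iteration altogether. As written, though, the termination of the iteration is an unresolved gap.
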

\begin{proof}
To simplify the notation,  we assume that $B_R = B(0,R)$ is centered at the origin, i.e. we take $x_0=0$. We point out that it is sufficient to establish
\begin{equation}\label{eq: desired simplified}
\begin{aligned}
|\tuh|^2_{H^s(B_{R/2})}\le C |u|^2_{H^s(B_{R})}&+\frac{C}{R^{2s}}\|u\|_{L^2(B_{R})}^2+CR^{d+2s}\left(\int_{B_R^c}\frac{|u(x)|}{|x|^{d+2s}} \, dx \right)^2\\
&+\frac{C}{R^{2s}}\|\tuh\|_{L^2(B_{R})}^2+CR^{d+2s}\left(\int_{B_R^c}\frac{|\tuh(x)|}{|x|^{d+2s}} \, dx \right)^2.
\end{aligned}
\end{equation}
In fact, the assertion would then follow upon writing $u-\tuh=(u-v_h)+(v_h-\tuh)$ and using the fact that the local Galerkin orthogonality \eqref{eq: local Galerkin orthogonality} holds with $u \mapsto u-v_h$ and $\tuh \mapsto  \tuh - v_h$ and the triangle inequality. We argue along the lines of Lemma \ref{L:Caccioppoli} (Caccioppoli estimate). We divide the proof into several steps.

\medskip \noindent
{\it Step 1: Decomposing the $H^s$-seminorm}.
Let $\eta\in C^\infty(\Om)$ be as in \eqref{eq:eta}. Recalling the definition \eqref{eq:psi} of $\psi=\eta^2 \tuh-I_h(\eta^2 \tuh)$, whence $I_h(\eta^2 \tuh)=0$ in $B_{7R/8}^c$ according to the proof of Lemma \ref{lemma: superapproximation}, and using the local Galerkin orthogonality \eqref{eq: local Galerkin orthogonality}, we have
\begin{equation} \label{eq:split_inner_product}
\begin{aligned}
\as{\tuh}{\eta^2 \tuh}&=\as{\tuh}{I_h(\eta^2 \tuh)}+
\as{\tuh}{\psi}\\
&=\as{u}{I_h(\eta^2 \tuh)}+\as{\tuh}{\psi}\\
&=\as{u}{\eta^2 \tuh}-\as{u}{\psi}+\as{\tuh}{\psi}.
\end{aligned}
\end{equation}
In the same fashion as in the proof of Lemma \ref{L:Caccioppoli}, we have
\[
\begin{aligned}
\as{\tuh}{\eta^2 \tuh}= |\eta \tuh |_{H^s(B_R)}^2 &-\int_{B_R} \int_{B_R}\frac{\tuh(x)\tuh(y)[\eta(x)-\eta(y)]^2}{|x-y|^{d+2s}}dydx\\
&+2 \int_{B_R} \int_{B^c_R}\frac{(\tuh(x)-\tuh(y))\eta^2(x) \tuh(x)}{|x-y|^{d+2s}}dydx.
\end{aligned}
\]
Invoking \eqref{eq:split_inner_product} we thus obtain the decomposition
$|\eta \tuh|^2_{H^s(B_R)} = \sum_{k=1}^5 I_k$, where
\begin{equation} \label{eq:bound_seminorm}
\begin{aligned}
I_1 := &\int_{B_R} \int_{B_R}\frac{\tuh(x)\tuh(y)[\eta(x)-\eta(y)]^2}{|x-y|^{d+2s}}dydx,\\
I_2 := &-2\int_{B_R} \int_{B^c_R}\frac{(\tuh(x)-\tuh(y))\eta^2(x) \tuh(x)}{|x-y|^{d+2s}}dydx,\\
I_3 := & \as{u}{\eta^2 \tuh}, \quad
I_4 := -\as{u}{\psi}, \quad
I_5 := \as{\tuh}{\psi}.
\end{aligned}
\end{equation}

\medskip \noindent
{\it Step 2: Bounding $I_1 + I_2$}.
Proceeding exactly as in the proof of Lemma \ref{L:Caccioppoli}, we obtain
\begin{equation*} 
I_1+I_2\le \frac{C}{R^{2s}}\|\tuh\|_{L^2(B_R)}^2+CR^{d+2s}\left(\int_{B_R^c}\frac{|\tuh(x)|}{|x|^{d+2s}}dx\right)^2. 
\end{equation*}

\medskip \noindent 
{\it Step 3: Bounding $I_3$}.
Using the definition of $H^s$-inner product, we write $I_3 = I_{31} + I_{32}$ with
\[
\begin{aligned}
I_{31} :=& \int_{B_R} \int_{B_R}\frac{[u(x)-u(y)] \, [\eta^2(x)\tuh(x)-\eta^2(y)\tuh(y)]}{|x-y|^{d+2s}}dydx,\\
I_{32} :=& 2\int_{B_R} \int_{B^c_R}\frac{[u(x)-u(y)] \, [\eta^2(x)\tuh(x)-\eta^2(y)\tuh(y)]}{|x-y|^{d+2s}}dydx.
\end{aligned}
\]
In light of the identity
\[
\eta^2(x)\tuh(x)-\eta^2(y)\tuh(y)=\eta(x)[\eta(x)\tuh(x)-\eta(y)\tuh(y)]+\eta(y)[\eta(x)-\eta(y)]\tuh(y),
\]
we arrive at
\[
\begin{aligned}
I_{31}=& \int_{B_R} \int_{B_R}\frac{[u(x)-u(y)]\eta(x)[\eta(x)\tuh(x)-\eta(y)\tuh(y)]}{|x-y|^{d+2s}}dydx\\
&+\int_{B_R} \int_{B_R}\frac{[u(x)-u(y)]\eta(y)[\eta(x)-\eta(y)]\tuh(y)}{|x-y|^{d+2s}}dydx\\
&\le | u|_{H^s(B_R)}|\eta \tuh|_{H^s(B_R)}+\frac{C}{R}\int_{B_R} \int_{B_R}\frac{|u(x)-u(y)|\,|\tuh(y)|}{|x-y|^{d-1+2s}}dydx,
\end{aligned}
\]
where in the last step we used that $|\eta| \le 1$ and $|\eta(x)-\eta(y)|\le CR^{-1}|x-y|$ according to \eqref{eq:eta}. Employing the Cauchy-Schwarz inequality, we estimate
\[
\begin{aligned}
\int_{B_R} \int_{B_R} & \frac{|u(x)-u(y)|\, |\tuh(y)|}{|x-y|^{d-1+2s}}dydx \\
&  \le \left(\int_{B_R} \int_{B_R}\frac{|u(x)-u(y)|^2}{|x-y|^{d+2s}}dydx\right)^{\frac12} \
    \left(\int_{B_R}\int_{B_R}\frac{|\tuh(y)|^2}{|x-y|^{d-2+2s}}dydx\right)^{\frac12}\\
&\le CR^{1-s}| u|_{H^s(B_R)}\|\tuh\|_{L^2(B_R)}.
\end{aligned}
\]
In the last step above we used that the kernel $|x-y|^{d-2+2s}$ is integrable at $\{ x = y \}$, and combined Fubini's theorem with integration in polar coordinates, to deduce
\[
\int_{B_R}\frac{dx}{|x-y|^{d-2+2s}}\le C\int_0^{2R} \rho^{d-1-d+2-2s}d \rho = CR^{2-2s}
\quad\forall y \in B_R.
\]
As a result, the Young's inequality yields
\[
I_{31}\le C_\eps| u|^2_{H^s(B_R)}+\eps|\eta \tuh|^2_{H^s(B_R)}+\frac{C}{R^{2s}}\|\tuh\|^2_{L^2(B_R)},
\]
where $\eps > 0$ is a number to be chosen.

To deal with $I_{32}$ we proceed similarly to the estimate of $I_2$ in the proof of Lemma \ref{L:Caccioppoli}. Since $|\eta| \le 1$ and $\eta=0$ on $B^c_{3R/4}$, in view of \eqref{eq:eta}, we thus get
\begin{equation*}
I_{32} \le 
2 \int_{B_{3R/4}} |\tuh(x)|\int_{B^c_R}\frac{|u(x)-u(y)|}{|x-y|^{d+2s}}dydx
\le I_{321} + I_{322}
\end{equation*}
with
\begin{align*}
I_{321} &:= 
2 \int_{B_{3R/4}}\left(|u(x)|\cdot|\tuh(x)|\int_{B^c_R}\frac{dy}{|x-y|^{d+2s}}\right)dx,
\\
I_{322} &:= 2 \int_{B_{3R/4}}\left(|\tuh(x)|\int_{B^c_R}\frac{|u(y)|}{|x-y|^{d+2s}}dy\right)dx.
\end{align*}
Consequently, integrating in polar coordinates
\[
\int_{B^c_R}\frac{dy}{|x-y|^{d+2s}} \le C \int_{R/4}^\infty \rho^{-1-2s} d\rho = \frac{C}{R^{2s}} \quad\forall \, x \in B_{3R/4},
\]
and using the Cauchy-Schwarz inequality, leads to
\[
I_{321}\le C R^{-2s}\|\tuh\|_{L^2(B_R)}\|u\|_{L^2(B_R)}\le C R^{-2s}\|\tuh\|^2_{L^2(B_R)}+CR^{-2s}\|u\|^2_{L^2(B_R)}.
\]
By the H\"older's inequality and the fact that $\frac{1}{4} |y| \le |x-y|$ for all $x \in B_{3R/4}$ and $y \in B_R^c$, we have
\[
\begin{aligned}
I_{322}&\le \|\tuh\|_{L^1(B_R)}\sup_{x\in B_{3R/4}}\int_{B^c_R}\frac{|u(y)|}{|x-y|^{d+2s}}dy\le CR^{d/2}\|\tuh\|_{L^2(B_R)}\int_{B^c_R}\frac{|u(y)|}{|y|^{d+2s}}dy\\
&\le \frac{C}{R^{2s}}\|\tuh\|^2_{L^2(B_R)}+CR^{d+2s}\left(\int_{B^c_R}\frac{|u(y)|}{|y|^{d+2s}}dy\right)^2.
\end{aligned}
\]
Collecting the estimates above, we deduce
\begin{align*}
  I_3 \le \eps|\eta \tuh|^2_{H^s(B_R)} &+ C_\eps|u|^2_{H^s(B_R)}+\frac{C}{R^{2s}}\|u\|^2_{L^2(B_R)} \\
 & +\frac{C}{R^{2s}}\|\tuh\|^2_{L^2(B_R)}+CR^{d+2s}\left(\int_{B^c_R}\frac{|u(y)|}{|y|^{d+2s}}dy\right)^2.
\end{align*}

\medskip \noindent 
{\it Step 4: Bounding $I_4$}.
Using that $\psi=0$ on $B_R^c$ yields the splitting $I_4 = I_{41} + I_{42}$ with
\begin{align*}
  I_{41} &:= -\int_{B_R} \int_{B_R}\frac{[u(x)-u(y)][\psi(x)-\psi(y)]}{|x-y|^{d+2s}}dydx, \\
  I_{42} &:= -2\int_{B_R} \int_{B^c_R}\frac{[u(x)-u(y)]\psi(x)}{|x-y|^{d+2s}}dydx.
\end{align*}
Employing \eqref{eq: superapproximation with L2 only} and the Young's inequality, we obtain
\begin{equation*}
I_{41} \le  |u|_{H^s(B_R)}|\psi|_{H^s(B_R)} \le C|u|^2_{H^s(B_R)}+\frac{C}{R^{2s}}\|\tuh\|^2_{L^2(B_R)}.
\end{equation*}
We handle $I_{42}$ similarly to $I_{32}$, namely use \eqref{eq:psi=0} to write $I_{42} \le I_{421} + I_{422}$ with
\begin{align*}
  I_{421} :=  2 \int_{B_{7R/8}} \! \left(|u(x)| \, |\psi(x)|\int_{B^c_R}\frac{dy}{|x-y|^{d+2s}}\right)dx \!\le\! \frac{C}{R^{2s}}\|u\|^2_{L^2(B_R)} \!+\! \frac{C}{R^{2s}}\|\tuh\|^2_{L^2(B_R)},
\end{align*}
and
\begin{align*}
  I_{422} := & 2\int_{B_{7R/8}}\left(|\psi(x)|\int_{B^c_R}\frac{|u(y)|}{|x-y|^{d+2s}}dy\right)dx \le C \|\psi\|_{L^1(B_R)} \int_{B^c_R}\frac{|u(y)|}{|y|^{d+2s}}dy
  \\ & 
  \le \frac{C}{R^{2s}}\|\tuh\|^2_{L^2(B_R)}+CR^{d+2s}\left(\int_{B^c_R}\frac{|u(y)|}{|y|^{d+2s}}dy\right)^2,
\end{align*}
in view of \eqref{eq: superapproximation with L2 only} with $s=0$, and the fact that
$\frac18 |y| \le |x-y|$ for all $x\in B_{7R/8}$ and $y\in B_R^c$ and argue as in Step 3.
Combining the estimates above, we obtain
\begin{equation*} 
  I_4\le  \, C|u|^2_{H^s(B_R)}+\frac{C}{R^{2s}}\|u\|^2_{L^2(B_R)}
   +\frac{C}{R^{2s}}\|\tuh\|^2_{L^2(B_R)}+CR^{d+2s}\left(\int_{B^c_R}\frac{|u(y)|}{|y|^{d+2s}}dy\right)^2.
\end{equation*}

\noindent
{\it Step 5: Bounding $I_5$}.
We will  treat $I_5$ differently from $I_4$, because it contains $\tuh$ in place of $u$, which causes serious challenges on shape-regular meshes.  Using that $\psi=0$ on $B_R^c$, we split $I_5 = I_{51} + I_{52}$ with
\begin{equation*} \label{eq:split_I5} \begin{split}
& I_{51} := \int_{B_R} \int_{B_R}\frac{[\tuh(x)-\tuh(y)][\psi(x)-\psi(y)]}{|x-y|^{d+2s}}dydx,  \\
& I_{52} := 2 \int_{B_R} \int_{B^c_R}\frac{[\tuh(x)-\tuh(y)]\psi(x)}{|x-y|^{d+2s}}dydx.
\end{split} \end{equation*}
Recalling Remark \ref{remark: local version} (fractional inner product on subdomains), we decompose the integral over $B_R \times B_R$ into sums over $(T\cap B_R)\times (S_T\cap B_R)$ and $(T\cap B_R)\times (S_T^c\cap B_R)$ for $T\in\Lambda_{R}$, and use the fact that $\int_{S_T^c} |x-y|^{-d-2s} dy \le C h_T^{-2s}$ for every  $x \in T$, to end up with
$I_{51} \le I_{511} + I_{512} + I_{513}$ where
\begin{align*}
  I_{511} &:= \sum_{T\in \Lambda_{7R/8}} |\tuh|_{H^s(S_T)}|\psi|_{H^s(S_T)},
   \\
  I_{512} &:= \sum_{T\in \Lambda_{7R/8}} \frac{C}{h_T^{2s}}\int_T |\tuh(x)||\psi(x)|dx,
   \\
  I_{513} &:= 2 \sum_{T\in \Lambda_R} \int_{T\cap B_R}\int_{S_T^c\cap B_R}\frac{|\tuh(x)||\psi(y)|}{|x-y|^{d+2s}}dydx.
\end{align*}
Note that we have used \eqref{eq:7R/8} in the definition of $I_{511}$ and exploited \eqref{eq:psi=0} in the definition of $I_{512}$ to replace $\Lambda_R$ by $\Lambda_{7R/8}$.
We next apply the local inverse inequality \eqref{eq: inverse Hs to L2} in conjunction with the superapproximation estimate \eqref{eq: superapproximation fractional12} to deduce
\begin{align*}
  I_{511} &:= \sum_{T\in \Lambda_{7R/8}} |\tuh|_{H^s(S_T)}|\psi|_{H^s(S_T)}
  \\ &\le C \sum_{T\in \Lambda_{7R/8}} \|\tuh\|_{L^2(S_T)} \Big( \frac{h_T^{1-s}}{R} |\eta\tuh|_{H^s(S_T)} + \frac{h_T^{2-2s}}{R^2} \|\tuh\|_{L^2(S_T)} \Big) \\
&  \le \eps |\eta\tuh|_{H^s(B_R)}^2 + \frac{C_\eps}{R^{2s}} \|\tuh\|_{L^2(B_R)}^2,
\end{align*}
because $16 \, h_T \le R$ and $\sum_{T\in \Lambda_{7R/8}} |v|^2_{H^s(S_T)} \le C(\sigma) |v|^2_{H^s(B_R)}$ for all $v \in H^s(B_R)$, the latter due to the uniformly bounded overlap of stars $S_T$ in the shape-regular mesh $\Th$. The upper bound for $I_{512}$ employs instead the superapproximation estimate \eqref{eq: usual superapproximation} with $p=2$, the inverse inequality \eqref{eq: weighted inverse} and Young's inequality
\begin{align*}
  I_{512} &\le C \sum_{T\in \Lambda_{7R/8}} \frac{1}{h_T^{2s}} \|\tuh\|_{L^2(T)} \|\psi\|_{L^2(T)} \\
   &\le C \sum_{T\in \Lambda_{7R/8}} \|\tuh\|_{L^2(S_T)} \Big( \frac{h_T^{1-s}}{R} |\eta\tuh|_{H^s(S_T)} + \frac{h_T^{2-2s}}{R^2} \|\tuh\|_{L^2(S_T)} \Big) \\
&  \le \eps |\eta\tuh|_{H^s(B_R)}^2 + \frac{C_\eps}{R^{2s}} \|\tuh\|_{L^2(B_R)}^2.
\end{align*}

The remaining term $I_{513}$ is rather tricky and reveals the nonlocal nature of our problem. Manipulating $I_{513}$ is the most delicate and innovative part of the proof relative to the second order case \cite{DemlowA_GuzmanJ_SchatzAH_2011,NitscheJA_SchatzAH_1974a}. To keep notation short, we set
\[
T_R:=T\cap B_R, \quad
S_{T,R}^c:=S^c_T\cap B_{7R/8}, \quad
\Lambda_{T,R}^c := \big\{T'\in\mathcal{T}_h: T'\cap S_{T,R}^c \ne \emptyset  \big\}.
\]
We exploit \eqref{eq:psi=0} to rewrite $I_{513}$ as
\begin{align*}
  I_{513} &= 2\sum_{T\in \Lambda_R} \, \sum_{T' \in \Lambda_{T,R}^c} \int_{T_R}|\tuh(x)|\int_{T'_R}\frac{|\psi(y)|}{|x-y|^{d+2s}}dydx
  \\
  &\le C \sum_{T\in \Lambda_R} \, \sum_{T'\in \Lambda_{T,R}^c} \|\tuh\|_{L^1(T_R)}\|\psi\|_{L^1(T'_R)} \, d(T,T')^{-d-2s}, 
\end{align*}
where $d(T,T')$ denotes the distance between elements $T$ and $T'$. We make use of the superapproximation estimate \eqref{eq: usual superapproximation} with $p=1$ to infer that $I_{513} \le I_{513}^1 + I_{513}^2,$ where
\begin{align*}
  I_{513}^1 &:= C\sum_{T\in \Lambda_R} \, \sum_{T'\in \Lambda_{T,R}^c}\|\tuh\|_{L^1(T_R)}
  \|\tuh\|_{L^1(T'_R)} \, d(T,T')^{-d-2s} \, \frac{h^2_{T'}}{R^2},
  \\
  I_{513}^2 &:= C\sum_{T\in \Lambda_R} \, \sum_{T'\in \Lambda_{T,R}^c}\|\tuh\|_{L^1(T_R)}
  \|\na(\eta \tuh)\|_{L^{1}(T'_R)} \, d(T,T')^{-d-2s} \, \frac{h^2_{T'}}{R}.
\end{align*}  
The first term $I_{513}^1$ is problematic. We rewrite it again in integral form upon invoking the meshsize function $h(y)$, which is locally equivalent to the element meshsize, namely $h(y)\approx h_{T'}$ for all $y\in T'$:
\begin{align*}
  I_{513}^1 \le CR^{-2}\sum_{T\in \Lambda_R}\int_{T_R} \int_{S_{T,R}^c} |\tuh(x)| \frac{h(y)^2|\tuh(y)|}{|x-y|^{d+2s}}dydx \le I_{513}^{11} + I_{513}^{12},
\end{align*}
with
\begin{align*}
& I_{513}^{11} = CR^{-2}\sum_{T\in \Lambda_R} \int_{T_R}|\tuh(x)|^2\int_{S_{T,R}^c} \frac{h(y)^2}{|x-y|^{d+2s}}dydx ,
\\
& I_{513}^{12} =
CR^{-2}\sum_{T\in \Lambda_R} \int_{T_R} \int_{S_{T,R}^c} \frac{h(y)^2|\tuh(y)|^2}{|x-y|^{d+2s}}dydx.
\end{align*}
The first term does not scale correctly unless the meshsize is quasi-uniform, a restriction on $\Th$ that is too severe for us to assume. It is here that we resort to the Lipschitz property \eqref{eq: mesh function} of $h(y)$, valid for shape-regular $\Th$, and integrate in polar coordinates $|x-y|=\rho$, to compute for $x \in T\in \Lambda_R$
\begin{equation*} \begin{split}
\int_{S_{T,R}^c}\frac{h(y)^2}{|x-y|^{d+2s}}dy & \le C \int_{S_T^c\cap B_R}\frac{h(x)^2+ C |x-y|^2}{|x-y|^{d+2s}}dy \\
& \le C\int_{Ch_T}^R \frac{h(x)^2+\rho^2}{\rho^{d+2s}}\rho^{d-1} d\rho\le CR^{2-2s},
\end{split} \end{equation*} 
whence
\begin{equation*}
I_{513}^{11}
 \le \frac{C}{R^{2s}}\sum_{T\in \Lambda_R} \int_{T_R}|\tuh(x)|^2 dx \le \frac{C}{R^{2s}}\|\tuh\|^2_{L^2(B_R)}.
\end{equation*} 
On the other hand, resorting to Lemma \ref{L:symmetry} (symmetry), we have
\begin{align*}
I_{513}^{12} & \le C R^{-2}
\sum_{T\in\Th} \int_T \int_{S_T^c} \frac{h(y)^2|\tuh(y)|^2\chi_{B_R}(y) \chi_{B_R}(x)}{|x-y|^{d+2s}} \, dy dx
\\
& = C R^{-2} \sum_{T\in\Th} \int_T \int_{S_T^c} \frac{h(x)^2|\tuh(x)|^2\chi_{B_R}(x) \chi_{B_R}(y)}{|x-y|^{d+2s}} \, dy dx
\\
& = C R^{-2} \sum_{T\in\Th} \int_T h(x)^2|\tuh(x)|^2\chi_{B_R}(x) \int_{S_T^c} \frac{\chi_{B_R}(y)}{|x-y|^{d+2s}} \, dy dx,
\end{align*}
where $\chi_{B_R}$ denotes the characteristic function of $B_R$. Since
\begin{equation*}
\int_{S_T^c} \frac{\chi_{B_R}(y)}{|x-y|^{d+2s}} \, dy \le C \int_{Ch_T}^R \rho^{-1-2s} d\rho \le C h_T^{-2s} \quad \forall \, x\in T,
\end{equation*}
$h(x) \approx h_T$ for all $x\in T$ and $16 \, h_T \le R$, we see that
\begin{align*}
I_{513}^{12} \le C R^{-2}  
\sum_{T\in \Th} h_T^{2-2s} \int_T \chi_{B_R} (x)|\tuh(x)|^2 dx 
\le CR^{-2s}\|\tuh\|^2_{L^2(B_R)}.
\end{align*}
Collecting the preceding estimates for $I_{513}^1$, we realize that
\[
I_{513}^1 \le CR^{-2s}\|\tuh\|^2_{L^2(B_R)}.
\]
We handle $I_{513}^2$ similarly to $I_{513}^1$, namely
\begin{align*}
I_{513}^2 
&\le CR^{-1}\sum_{T\in \Lambda_R}\int_{T_R} \int_{S_{T,R}^c} |\tuh(x)| \frac{h(y)^2|\na(\eta \tuh)(y)|}{|x-y|^{d+2s}}dydx\\
&\le C_\eps R^{-2}\sum_{T\in \Lambda_R}\int_{T_R}\int_{S_{T,R}^c}\frac{|\tuh(x)|^2 h(y)^2}{|x-y|^{d+2s}}dydx
\\ & \quad + C\eps \sum_{T\in \Lambda_R}\int_{T_R}\int_{S_{T,R}^c} \frac{h(y)^2|\na(\eta \tuh)(y)|^2}{|x-y|^{d+2s}}dydx\\
&\le C_\eps R^{-2s}\|\tuh\|^2_{L^2(B_R)}+C\eps \sum_{T\in \Lambda_R}\int_{T_R}\int_{S_{T,R}^c} \frac{h(y)^2|\na(\eta \tuh)(y)|^2}{|x-y|^{d+2s}}dydx,
\end{align*}
since the first term is identical to $I_{513}^{11}$. For the other term in the right hand side, we proceed exactly as with $I_{513}^{12}$, thereby exploiting again Lemma \ref{L:symmetry} (symmetry) and combining it with the inverse-type estimate \eqref{eq: weighted inverse}, to obtain
\begin{align*}
\sum_{T\in \Lambda_R} \int_{T_R} \int_{S_{T,R}^c}\frac{h(y)^2|\na(\eta \tuh)(y)|^2}{|x-y|^{d+2s}}dydx  \le C \sum_{T\in \Th} h_T^{2-2s}|\eta \tuh|^2_{H^1(T)}  \le C \, |\eta \tuh|^2_{H^s(B_R)}.
\end{align*}
Combining the estimates for $I_{511}$, $I_{512}$, $I_{513}$ we deduce that
\[
I_{51}\le C_\eps R^{-2s}\|\tuh\|^2_{L^2(B_R)}+ C \, \eps \, |\eta \tuh|^2_{H^s(B_R)}.
\]
It only remains to bound $I_{52}$, which is exactly the same as $I_{42}$ but with $u$ replaced by $\tuh$. Hence, proceeding similarly to the estimate for $I_{42}$, we readily arrive at
\begin{equation*}
I_{52} \le \frac{C}{R^{2s}}\|\tuh\|^2_{L^2(B_R)}+CR^{d+2s}\left(\int_{B^c_R}\frac{|\tuh(y)|}{|y|^{d+2s}}dy\right)^2.
\end{equation*}
This together with the previous estimate yields
\begin{equation*}
I_5\le C \eps |\eta \tuh|^2_{H^s(B_R)}+\frac{C_\eps}{R^{2s}}\|\tuh\|^2_{L^2(B_R)}+CR^{d+2s}\left(\int_{B^c_R}\frac{|\tuh(y)|}{|y|^{d+2s}}dy\right)^2.
\end{equation*}

\medskip \noindent
{\it Step 6: Conclusion}.
Inserting the bounds proved in Steps 2 through 5 for $I_i, 1\le i \le 5$ into \eqref{eq:bound_seminorm}, we deduce that
\[
\begin{aligned}
|\eta \tuh|^2_{H^s(B_R)}&\le C_\eps |u|^2_{H^s(B_{R})}+\frac{C}{R^{2s}}\|u\|_{L^2(B_{R})}^2+CR^{d+2s} \left(\int_{B_R^c}\frac{|u(x)|}{|x|^{d+2s}} \, dx \right)^2\\
& + C \eps |\eta \tuh|^2_{H^s(B_R)}+\frac{C_\eps}{R^{2s}}\|\tuh\|_{L^2(B_{R})}^2+CR^{d+2s} \left(\int_{B_R^c}\frac{|\tuh(x)|}{|x|^{d+2s}} \, dx\right)^2,
\end{aligned}
\]
for all $\eps > 0$.
We now set $\eps$ to be such that the factor multiplying $|\eta \tuh|^2_{H^s(B_R)}$ in the right hand side equals $\frac{1}{2}$ and kick that term back to the left hand side.
This finally implies the estimate \eqref{eq: desired simplified} because  $|\tuh|^2_{H^s(B_{R/2})}\le |\eta \tuh|^2_{H^s(B_R)}$.
\end{proof}

We can derive explicit local $H^s$-convergence rates by combining Theorem \ref{thm:local_energy} with the convergence estimates from Section \ref{sec:FE}. We explore this next.

\subsection{Applications to interior error estimates}\label{S:applications}

Theorem \ref{thm:local_energy} (local energy error estimate) gives us new ways to examine the behavior of the numerical error and, more importantly, check the sharpness of known estimates. Bounding the low order terms in Theorem \ref{thm:local_energy} by global $L^2$-terms, we get the following immediate consequence of Theorem \ref{thm:local_energy}.

\begin{corollary}[local error estimate]\label{cor: simplified}
Let $u\in H^s(\Omega)$ be the solution of \eqref{eq:weak_linear} and $u_h$ be the finite element solution of \eqref{eq:weak_linear_discrete}. Then there is a constant $C$ depending on shape regularity such that
\begin{align*}
|u-u_h|_{H^s(B_{R/2})}\le C \inf_{v_h\in \mathbb{V}_h} \Big(|u-v_h|_{H^s(B_R)}&+\frac{1}{R^{s}}\|u-v_h\|_{L^2(\Omega)} \Big) + \frac{C}{R^{s}}\|u - u_h\|_{L^2(\Omega)}.
\end{align*}
\end{corollary}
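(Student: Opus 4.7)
The plan is to reduce Corollary \ref{cor: simplified} to Theorem \ref{thm:local_energy} by bounding the two nonlocal tail terms
\[
R^{d+2s}\left(\int_{B_R^c}\frac{|w(x)|}{|x-x_0|^{d+2s}}\,dx\right)^2, \qquad w=u-v_h \text{ or } w=u-\tuh,
\]
by global $L^2$-quantities. Since $u_h$ satisfies the global Galerkin orthogonality \eqref{eq:orthogonality}, it a fortiori satisfies the local orthogonality \eqref{eq: local Galerkin orthogonality}, so we may apply Theorem \ref{thm:local_energy} with $\tuh = u_h$.

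The key computation is a straightforward Cauchy--Schwarz followed by integration in polar coordinates. Since $w\in\tHs$ vanishes outside $\Om$, we have
\[
\int_{B_R^c}\frac{|w(x)|}{|x-x_0|^{d+2s}}\,dx
\le \|w\|_{L^2(\Om)}\left(\int_{B_R^c}\frac{dx}{|x-x_0|^{2d+4s}}\right)^{1/2}
\le C\,R^{-d/2-2s}\|w\|_{L^2(\Om)},
\]
where the last inequality uses $\int_R^\infty \rho^{-d-1-4s}\,d\rho = \frac{1}{d+4s}R^{-d-4s}$ after switching to polar coordinates centered at $x_0$. Squaring and multiplying by $R^{d+2s}$ produces exactly the bound $\frac{C}{R^{2s}}\|w\|_{L^2(\Om)}^2$, which matches (in scaling) the other low-order terms already present in Theorem \ref{thm:local_energy}.

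With this tail estimate in hand, the remaining work is bookkeeping: apply the inequality with $w = u-v_h$ and $w = u-\tuh = u-u_h$, use $\|w\|_{L^2(B_R)} \le \|w\|_{L^2(\Om)}$ to combine the two $L^2$-contributions in $B_R$ with the new tail estimates, take square roots, and finally take the infimum over $v_h\in\mathbb{V}_h$. There is no real obstacle here; the only subtlety worth mentioning is that the exponents in the tail computation come out exactly right (the factor $R^{d+2s}$ in Theorem \ref{thm:local_energy} is precisely what is needed to cancel the $R^{-d-4s}$ from the polar integral and leave $R^{-2s}$), which is why the global $L^2$-norm enters with the same weight $R^{-s}$ as the other low-order terms in the final bound.
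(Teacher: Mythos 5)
Your proposal is correct and follows essentially the same route as the paper: apply Theorem \ref{thm:local_energy} with $\tuh = u_h$ (valid since global Galerkin orthogonality implies the local version), then use Cauchy--Schwarz and polar coordinates to bound each tail term $R^{d+2s}\bigl(\int_{B_R^c}|w(x)|/|x-x_0|^{d+2s}\,dx\bigr)^2$ by $CR^{-2s}\|w\|_{L^2(\Omega)}^2$. The exponent bookkeeping you carried out matches the paper's computation exactly.
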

\begin{proof}
We apply Theorem \ref{thm:local_energy} to $u$ and $u_h$, which clearly satisfies the local Galerkin orthogonality condition \eqref{eq: local Galerkin orthogonality}. The proof then follows from the Cauchy-Schwarz inequality and integration in polar coordinates
\[
  \begin{aligned}
  R^{d+2s}\left(\int_{B_R^c}\frac{w(x)}{|x-x_0|^{d+2s}}dx\right)^2
  &\le R^{d+2s}\|w\|^2_{L^2(\Omega)}\int_{B_R^c}\frac{1}{|x-x_0|^{2d+4s}} \, dx\\
  & \le CR^{d+2s}\|w\|^2_{L^2(\Omega)}\int_{R}^\infty\frac{\rho^{d-1}}{\rho^{2d+4s}} \, d\rho
  =\frac{C}{R^{2s}}\|w\|^2_{L^2(\Omega)},
  \end{aligned}
\]
    for $w = |u-v_h|$ and $w = |u-u_h|$. This concludes the proof.
\end{proof}

Since $\|u - \Pi_h u\|_{L^2(\Omega)} \le C \|u - u_h\|_{L^2(\Omega)}$ generically,
  Corollary \ref{cor: simplified} shows that the interior $H^s$-error consists of a local approximation error in the $H^s$-norm and a global $L^2$-Galerkin error that accounts for {\it pollution} from the rest of the domain. We observe that this estimate is similar to local estimates for second order elliptic problems \cite{DemlowA_GuzmanJ_SchatzAH_2011,NitscheJA_SchatzAH_1974a}, except that the $L^2$-terms are now global. This is a mild manifestation of the nonlocal nature of \eqref{eq:Dirichlet}. We examine below the extreme cases of quasi-uniform and graded meshes.

Since the polynomial degree of $\mathbb{V}_h$ is $1$, no error estimate can be of order larger than $2$ and exploit regularity of $u$ beyond $H^2$ regardless of mesh structure. 
With this in mind, we let $f\in H^r(\Om)$ for $0\le r \le 2-2s$, and assume it leads to the {\it local} $H^{2s+r}_{\text{loc}}$-regularity of $u$ and the local approximation error
\begin{equation} \label{eq:interior_regularity}
\inf_{v_h\in \mathbb{V}_h} |u-v_h|_{H^s(B_R)} \le Ch^{s+r}\|f\|_{H^r(\Omega)}.
\end{equation}
We remark that this regularity assumption is plausible and known to be true for $r \le 1-s$ (see for example \cite{faustmann2020local}, and \cite{BiWaZu17} for a proof in the case $r=0$) and that if $\Omega$ is smooth and $f \in W^{r,p}(\Omega)$ for some $p>d/s$ then $u \in W^{r+2s, p}_{loc}(\Omega)$ \cite{Grubb}.
In order to compare with the global $H^s$-estimate of Theorem \ref{T:conv_linear} (global energy-norm convergence rates), we consider below the best scenario of maximal interior regularity, namely the case where the rate $s+r$ in \eqref{eq:interior_regularity} is sufficiently large $s+r\ge 1$, so that the local $H^s$-rate is dictated by the global $L^2$-error.

\medskip\noindent
{\bf Quasi-uniform meshes.}
Combining \eqref{eq:interior_regularity} with the estimates of Proposition \ref{prop:Aubin-Nitsche} (convergence rates in $L^2(\Omega)$ for quasi-uniform meshes) and Lemma \ref{L:further-reg} (further regularity) of Section \ref{sec:conv_L2}, we obtain
\begin{equation*} 
|u-u_h|_{H^s(B_{R/2})}\le Ch^{s+r}\|f\|_{H^r(\Omega)}+\begin{cases}
Ch^{2\alpha} |\log h|^{2\kappa} \| f \|_{L^2(\Om)} &  \text{ for  $\Omega$ Lipschitz} \\
Ch^{\alpha+\gamma} |\log h|^{2\kappa} \| f \|_{H^r(\Om)} & \text{ for  $\Omega$ smooth},
\end{cases}
\end{equation*}
where $\alpha = \min \{ s , \frac{1}{2} \}$, $\gamma = \min \{ s + r, \frac{1}{2} \}$ and 
if $\Omega$ is Lipschitz then $\kappa = \xi$ for $s \ne \frac12$ and $\kappa = \xi + 1$ for $s = \frac12$ ($\xi$ is the constant in \eqref{eq:regularity}), whereas if $\Omega$ is smooth then $\kappa = 1$ for $s \neq\frac12$ and $\kappa = 2$ for $s = \frac{1}{2}$.
We summarize these estimates in Table \ref{tab:hresult} (up to logarithmic factors){; we remark that the rates therein for Lipschitz domains do not require the exterior ball condition}. Compared with Theorem \ref{T:conv_linear} (global energy-norm convergence rates)
\begin{equation}\label{eq:global-rate}
\|u-u_h\|_{\widetilde{H}^s(\Omega)} \le C h^{\min\{s,\frac{1}{2}\}} |\log h|^{\kappa} \, \|f\|_{L^2(\Omega)},
\end{equation}
we see that all interior $H^s$-rates of Table \ref{tab:hresult} are improvements over the global rate of \eqref{eq:global-rate}. For a more regular right hand side $f \in H^r(\Om)$ with $s + r \ge \frac{1}{2}$ and in smooth domains, we observe an improvement over the global rate dictated by Lemma \ref{L:further-reg}, 
\[
\| u - u_h \|_{\tHs} \le C h^{\frac{1}{2}} |\log h|^{\kappa} \| f \|_{H^r(\Om)}. 
\]

\begin{table}[h]
\centering
\begin{tabular}{|c||c|c||c|c|}
\hline 
& \multicolumn{2}{|c||}{Local rates} & \multicolumn{2}{|c|}{Global rates} \\ \cline{2-5}
& $\Omega$-smooth & $\Omega$-Lipschitz & $\Omega$-smooth & $\Omega$-Lipschitz \\ \hline
$s \le \frac{1}{2}$ & $h^{s+\frac{1}{2}}$ & $h^{2s}$  & $h^{\frac{1}{2}}$ & $h^{s}$ \\
$s > \frac{1}{2}$ & $h$ & $h$  & $h^{\frac{1}{2}}$ & $h^{\frac{1}{2}}$ \\  
\hline
\end{tabular}
\vskip0.3cm
\caption{Comparison of convergence rates (up to logarithmic factors) between interior $|u-u_h|_{H^s(B_{R/2})}$ and global $|u-u_h|_{H^s(\Omega)}$ error estimates on quasi-uniform meshes for $f \in H^r(\Omega)$ with $s+r \ge 1$. The interior estimates exhibit an improvement $h^{\min\{s,1/2\}}$ regardless of the regularity of $\Omega$.}
\label{tab:hresult}
\end{table}

\noindent
{\bf Graded meshes.}
Section \ref{sec:FE} shows that graded meshes satisfying \eqref{eq:H} are able to compensate for the singular boundary layer for Lipschitz domains satisfying the exterior ball condition and smooth right-hand sides. Even though the next discussion is valid for any dimension $d$, for the sake of clarity and because our numerical experiments in Section \ref{sec:numerical} are carried out for $d=2$, we shall focus on this case. Moreover, we assume $s\neq \frac{1}{2}$, for otherwise additional logarithmic factors arise in our estimates below. We set $\mu = 2$ and $\beta = 1-s$ in Theorem \ref{T:conv_linear} (global energy-norm convergence rates) and Proposition \ref{prop: global_L2_graded} (convergence rates in $L^2(\Omega)$ for graded meshes) to establish the global rates of convergence in $\widetilde{H}^s(\Omega)$ and $L^2(\Omega)$
\begin{align}
\label{eq:global-rate-graded-Hs}    
&  \|u-u_h\|_{\widetilde{H}^s(\Omega)} \le C h |\log h| \, \|f\|_{C^{1-s}(\overline{\Omega})},
\\
\label{eq:global-rate-graded-L2} 
&  \|u-u_h\|_{L^2(\Omega)} \le C h^{\min\{1+s,3/2\}} |\log h|^{\xi+1} \, \|f\|_{C^{1-s}(\overline{\Omega})};
\end{align}
$\xi$ is the constant in \eqref{eq:regularity}.
In contrast, Theorem \ref{thm:local_energy} (local energy error estimate) in conjunction with \eqref{eq:global-rate-graded-L2} for $f \in C^{1-s}(\overline\Omega)\cap H^r(\Omega)$, $0\le r \le 2-2s$, gives the local $H^s$-estimate
\begin{equation*} 
  |u-u_h|_{H^s(B_{R/2})}\le Ch^{s+r}\|f\|_{H^r(\Omega)}+Ch^{\min\{1+s,3/2\}}
  |\log h|^{\xi+1} \| f \|_{C^{1-s}(\overline\Omega)}.
\end{equation*}
The condition $r \le 2-2s$ above is related to the use of piecewise linear finite elements. We now assume that $s+r = 2-s$ to write
\begin{equation*}\label{eq:interior-graded}
|u-u_h|_{H^s(B_{R/2})}\le C h^{\min\{1+s,2-s\}} |\log h|^{\xi+1} \, \Big(\| f \|_{C^{1-s}(\overline\Omega)} + \|f\|_{H^{2-2s}(\Omega)} \Big).
\end{equation*}
Comparing with the global $H^s$-error estimate in \eqref{eq:global-rate-graded-Hs}, we thus see an overall improvement rate $h^{\min\{s,1-s\}}$. We summarize these results in Table \ref{tab:hresult_graded}.   
 
\begin{table}[h]
\centering
\begin{tabular}{|c||c||c|}
\hline 
& \multicolumn{2}{|c|}{$\Omega$-smooth or Lipschitz e.b.c.} \\ \cline{2-3}
& { Local rates } & {Global rates} \\ \hline
$s \le \frac{1}{2}$ & $h^{s+1}$ & $h$ \\
$s > \frac{1}{2}$ & $h^{2-s}$ & $h$ \\  
\hline
\end{tabular}
\vskip0.3cm
\caption{Comparison of order of convergence (up to logarithmic factors) between interior $|u-u_h|_{H^s(B_{R/2})}$ and global $|u-u_h|_{H^s(\Omega)}$ error estimates on graded meshes with parameter $\mu=2$ for $f \in H^{2-2s}(\Omega)\cap C^{1-s}(\overline\Omega)$. The interior estimates exhibit an improvement rate $h^{\min\{s,1-s\}}$ for $\Omega$ either smooth or Lipschitz with an exterior ball condition (e.b.c.).}
\label{tab:hresult_graded}
\end{table}

We conclude with a comparison between local error rates on quasi-uniform and graded meshes for smooth data (domain and right-hand side). Tables \ref{tab:hresult} and \ref{tab:hresult_graded} show  that graded meshes yield an improvement of order $h^{\frac{1}{2}}$ for all $s \le \frac{1}{2}$, whereas the improvement is of order $h^{1-s}$ for $s > \frac{1}{2}$. Therefore, such an improvement is valid for all $0<s<1$ but becomes less significant in the limit $s \to 1$ of classical diffusion.

\section{Numerical experiments} \label{sec:numerical}

In this section we present some numerical experiments in a two-dimensional domain that illustrate the sharpness of our theoretical estimates. These experiments were performed with the aid of the code documented in \cite{ABB}; we also refer to \cite{ABB} for details on the implementation. Some discussion about the construction of graded meshes satisfying \eqref{eq:H} can be found in \cite{AcosBort2017fractional}.

In all of the experiments below we set $\Omega = B(0,1) \subset \mathbb{R}^2$ and $f \equiv 1$, so that we have an explicit solution at hand (cf. Example \ref{ex:nonsmooth}). This corresponds to smooth data (both domain and right-hand side) and the discussion of Section \ref{S:applications} applies. We computed errors with respect to the dimension $N$ of the finite element spaces $\mathbb{V}_h$ because $N = \#\mbox{Dofs}$ is a measure of complexity. In view of \eqref{eq:dofs} with $\mu = 2$, we always have the relation $N \approx h^{-2}$ for both quasi-uniform and graded meshes, the latter up to logarithmic terms. Therefore, the rates of convergence of Section \ref{S:applications} can be expressed in terms of $N$ as follows
\begin{equation}\label{eq:h-N} 
h^\beta \approx N^{-\beta/2},
\end{equation}
for appropriate exponents $\beta>0$.
We next explore computationally our error estimates in Section \ref{sec:conv_L2} for both the global $L^2$-norm and local $H^s$-seminorm.

\subsection{Global $L^2$-norm error estimates}
We start with quasi-uniform meshes and $s = 0.5, 0.6, 0.7, 0.8, 0.9$. Our findings are summarized in Figure \ref{fig:errsL2_uniform}: in all cases, we see good agreement with the linear convergence rate $\beta=1$ predicted by Proposition \ref{prop:Aubin-Nitsche} for $s\ge 1/2$, or equivalently $N^{-1/2}$ according to \eqref{eq:h-N}. Since the exact solution satisfies $u \in \cap_{\eps>0} \widetilde{H}^{s+1/2-\eps}(\Omega)$, we infer that the $L^2$-interpolation error obeys the inequality $\|u-I_h u\|_{L^2(\Omega)}\le C h^{s+1/2} |\log h|$. Interestingly, the finite element error $\|u-u_h\|_{L^2(\Omega)}\le C h |\log h|^2$ is of lower order for $s>1/2$, which turns out to be consistent with \eqref{eq:pollution_uniform}.

\begin{figure}[htbp]
\begin{center}
  \includegraphics[width=0.8\linewidth]{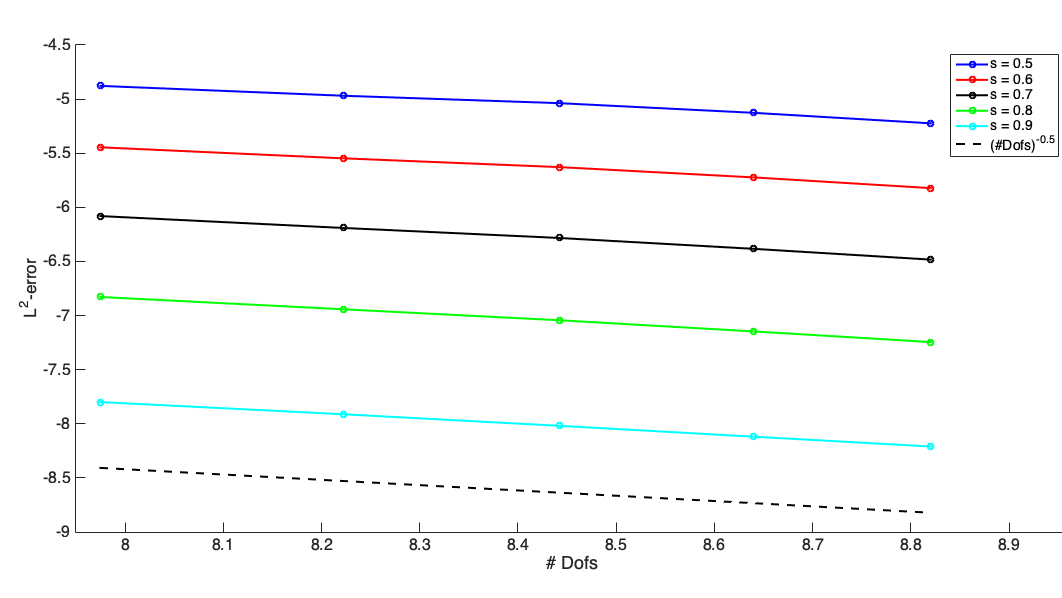}
\end{center}
\vspace{-0.5cm}
\caption{Global $L^2$-errors for the finite element solution to Example \ref{ex:nonsmooth} over quasi-uniform meshes with $s = 0.5, 0.6, 0.7, 0.8, 0.9$. The decay rate $N^{-1/2}$, which is of lower order than the interpolation error, is consistent with \eqref{eq:AN} for $s\ge 1/2$.}
\label{fig:errsL2_uniform}
\end{figure}

We next consider approximations using graded meshes that satisfy \eqref{eq:H} with $\mu=2$. By Proposition \ref{prop: global_L2_graded}, we expect a convergence rate of order $N^{-\min\{1/2+s/2,3/4\}}$, according to \eqref{eq:h-N}. In Figure \ref{fig:errsL2_graded} we display the computational rates of convergence for $s = 0.2, 0.4, 0.6, 0.8$, which are in good agreement with theory.  

\begin{figure}[htbp]
\begin{center}
\includegraphics[width=0.8\linewidth]{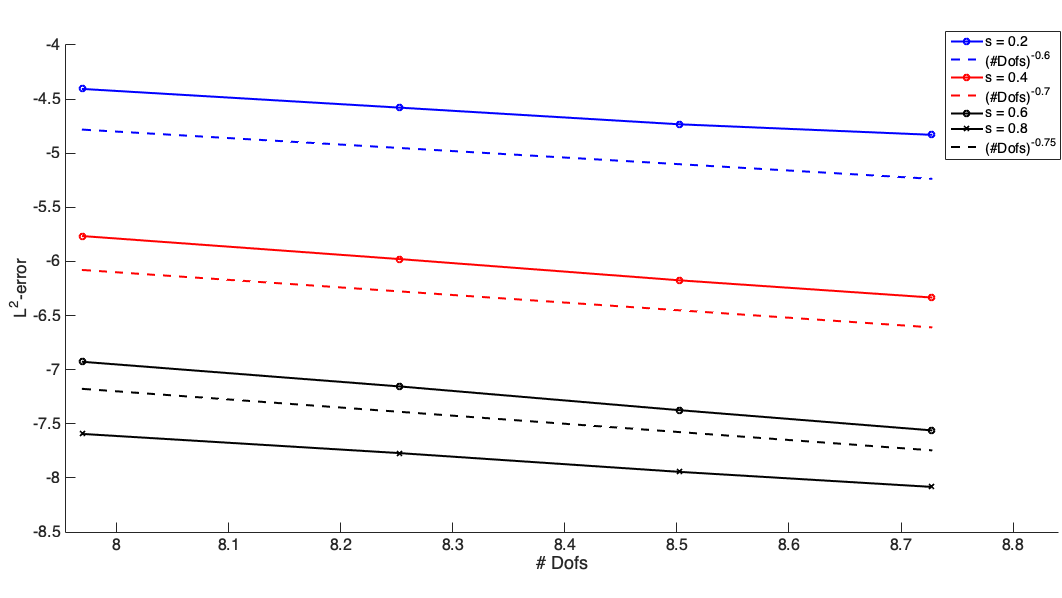} 
\end{center}
\vspace{-0.5cm}
\caption{Global $L^2$-errors for the finite element solution to Example \ref{ex:nonsmooth} over graded meshes with $\mu = 2$ and $s = 0.2, 0.4, 0.6, 0.8$. The computational decay rates are consistent with the theoretical prediction $N^{-\min\{1/2+s/2,3/4\}}$ of \eqref{eq:conv_L2_graded}.}
\label{fig:errsL2_graded}
\end{figure}

\subsection{Local $H^s$-norm error estimates}
We next explore the sharpness of our local error estimates derived in Section \ref{sec:local_estimates} and summarized in Tables \ref{tab:hresult} and \ref{tab:hresult_graded}. More precisely, we find computational rates of convergence in $H^s(B(0,0.3))$, namely the ball of radius $0.3$ centered at the origin, upon evaluating $| I_h u - u_h |_{H^s(B(0,0.3))}$ via the same techniques used when building the stiffness matrix. This is because
\[
| u - u_h |_{H^s(B(0,0.3))} \le | u - I_h u |_{H^s(B(0,0.3))} + | I_h u - u_h |_{H^s(B(0,0.3))}
\]
and the first term in the right hand side above is of higher order than the second for the locally smooth function $u$ of \eqref{eq:getoor}.
We display the errors in $H^s(B(0,0.3))$ for $s = 0.2, 0.4, 0.6, 0.8$ in Figures \ref{fig:localHs_uniform} and \ref{fig:localHs_graded} for quasi-uniform and graded meshes, respectively. We observe good agreement with the theoretical rates $N^{-\min\{\frac{1}{4}+\frac{s}{2},\frac{1}{2}\}}$ of Table  \ref{tab:hresult} and $N^{-\min\{\frac{1}{2}+\frac{s}{2},1-\frac{s}{2}\}}$ of Table \ref{tab:hresult_graded} in each case.

\begin{figure}[htbp]
\begin{center}
\includegraphics[width=0.8\linewidth]{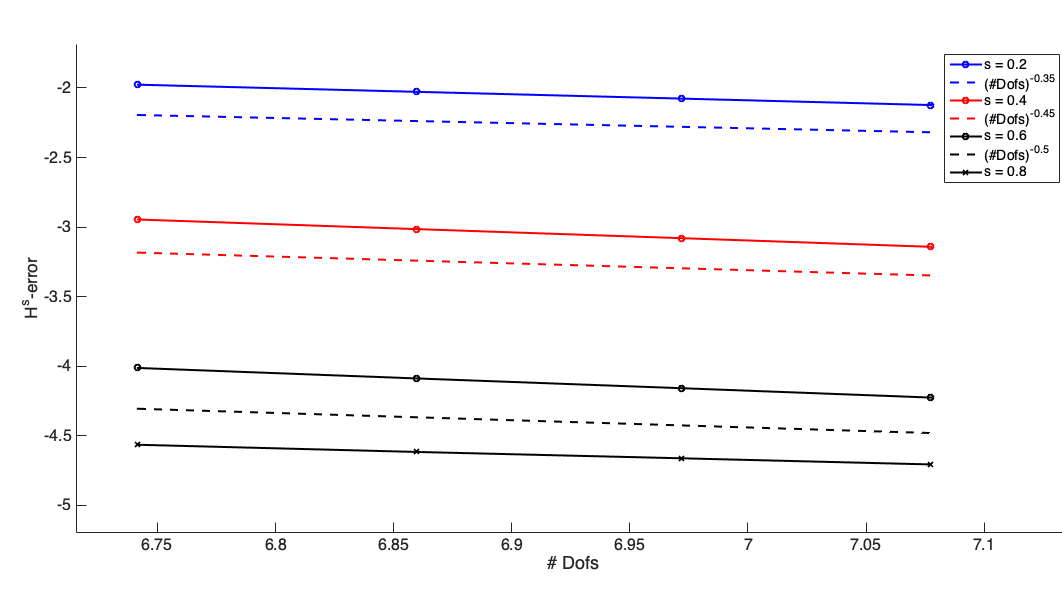}
\end{center}
\vspace{-0.5cm}
\caption{Errors in $H^s(B(0,0.3))$ for the finite element solution to Example \ref{ex:nonsmooth} over quasi-uniform meshes with $s = 0.2, 0.4, 0.6, 0.8$. Computational rates are consistent with the theoretical rates $N^{-\min\{\frac{1}{4}+\frac{s}{2},\frac{1}{2}\}}$ of Table \ref{tab:hresult}.}
\label{fig:localHs_uniform}
\end{figure}

\begin{figure}[htbp]
\begin{center}
\includegraphics[width=0.8\linewidth]{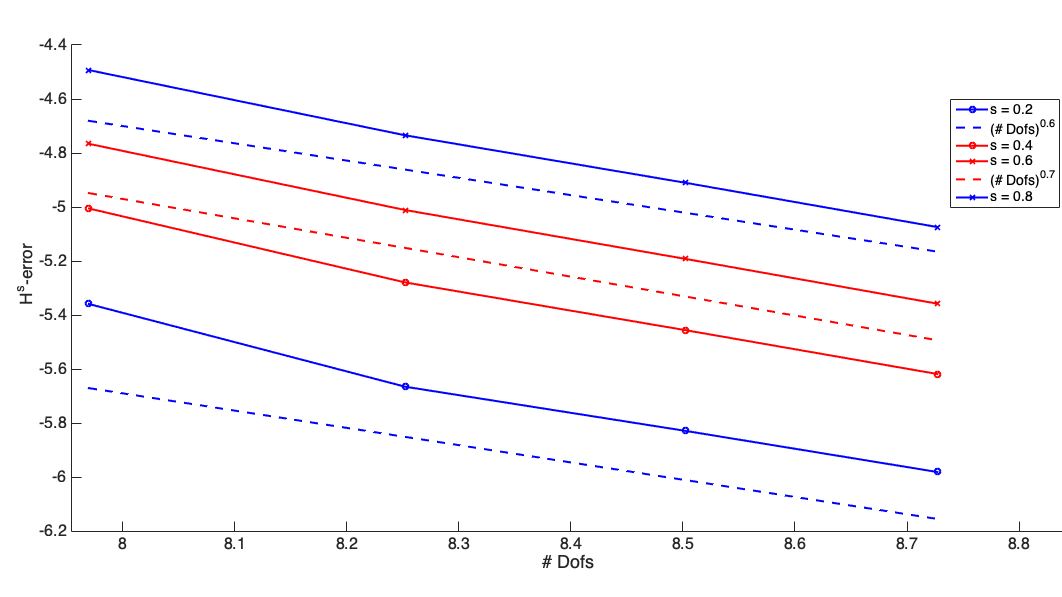} 
\end{center}
\vspace{-0.5cm}
\caption{Errors in $H^s(B(0,0.3))$ for the finite element solution to Example \ref{ex:nonsmooth} over graded uniform meshes with $\mu =2$ and $s = 0.2, 0.4, 0.6, 0.8$. Computational rates are consistent with the theoretical rates $N^{-\min\{\frac{1}{2}+\frac{s}{2},1-\frac{s}{2}\}}$ of Table \ref{tab:hresult_graded}.}
\label{fig:localHs_graded}
\end{figure}

Finally we emphasize that, according to our discussion in Section \ref{sec:energy-norm_estimates}, the {\em global} $H^s$-errors decay with rate $N^{-1/4}$ (for uniform meshes) and $N^{-\frac{1}{2}}$ (for graded meshes); see \eqref{eq:further-reg} and \eqref{eq:conv_Hs_graded}. It can be seen from our numerical experiments that in all cases the finite element solutions converge with higher order in $H^s(B(0,0.3))$. Therefore, these experiments illustrate that the finite element error is effectively concentrated around $\pp\Omega$.

\bibliography{fractional}{}
\bibliographystyle{siamplain}
\end{document}